\DeclareMathOperator{\trdeg}{trdeg}
\DeclareMathOperator{\spec}{Spec}
\begin{document}

{\theoremstyle{plain}
  \newtheorem{theorem}{Theorem}[section]
  \newtheorem{corollary}[theorem]{Corollary}
  \newtheorem{proposition}[theorem]{Proposition}
  \newtheorem{lemma}[theorem]{Lemma}
  \newtheorem{question}[theorem]{Question}
  \newtheorem{conjecture}[theorem]{Conjecture}
  \newtheorem{claim}[theorem]{Claim}
}

{\theoremstyle{definition}
  \newtheorem{definition}[theorem]{Definition}
  \newtheorem{remark}[theorem]{Remark}
  \newtheorem{example}[theorem]{Example}

}

\numberwithin{equation}{section}
\def\QQ{\mathbb{Q}}
\def\ZZ{\mathbb{Z}}
\def\NN{\mathbb{N}}
\def\R{\mathcal{R}}
\def \a {\alpha}
\def \A {\mathcal{A}}
\def \b {\beta}
\def \B {\mathcal{B}}
\def \d {\delta}
\def \D {\Delta}
\def \e {\epsilon}
\def \l {\lambda}
\def \g {\gamma}
\def \G {\Gamma}
\def \H {\mathcal{H}}
\def \p {\pi}
\def \n {\nu}
\def \m {\mu}
\def \s {\sigma}
\def \S {\Sigma}
\def \w {\omega}
\def \F {\Phi}
\def \O {\mathcal{O}}
\def \W {\Omega}
\def \k {\bold{k}}
\def \bp {\bold{p}}
\def \U {\mathcal{U}}
\def \t {\tau}
\def \P {\Psi}
\def \et {\eta}
\def \th {\theta}

\def \ra {\rightarrow}

\title[Toroidalization of generating sequences]{Toroidalization of generating
sequences in dimension two function fields of positive
characteristic}

\author{Laura Ghezzi}
\address{Florida International University, Department of Mathematics,
University Park, Miami, FL 33199, USA} \email{ghezzil@fiu.edu}
\urladdr{http://www.fiu.edu/$\sim$ghezzil/}

\author{Olga Kashcheyeva}
\address{University of Illinois at Chicago, Department of Mathematics,
Statistics and Computer Science, 851 S. Morgan (m/c 249), Chicago,
IL 60607, USA} \email{olga@math.uic.edu}
\urladdr{http://www.math.uic.edu/$\sim$olga/}

\thanks{The first author is partially supported by the
Florida International University Faculty Research Award.}


\begin{abstract}
We give a characteristic free proof of the main result of \cite{GHK}
concerning toroidalization of generating sequences of valuations in
dimension two function fields. We show that when an extension of two
dimensional algebraic regular local rings $R\subset S$ satisfies the
conclusions of the Strong Monomialization theorem of Cutkosky and
Piltant, the map between generating sequences in $R$ and $S$ has a
toroidal structure.
\end{abstract}

\maketitle

\section{Introduction}

The aim of this paper is to prove the main result of \cite{GHK} in
positive characteristic. We start by recalling the set-up and the
necessary definitions.

Let $\k$ be an algebraically closed field, and let $K$ be an
algebraic function field over $\k$. We say that a subring $R$ of $K$
is {\it algebraic} if $R$ is essentially of finite type over $\k$.
We will denote the maximal ideal of a local ring $R$ by $m_R$.

Let $K^*/K$ be a finite separable extension of algebraic function
fields of transcendence degree 2 over $\k$. Let $\n^*$ be a
$\k$-valuation of $K^*$ with valuation ring $V^*$ and value group
$\G^*$. Let $\n$ be the restriction of $\n^*$ to $K$ with valuation
ring $V$ and value group $\G$. Consider an extension of algebraic
regular local rings $R \subset S$ where $R$ has quotient field $K$,
$S$ has quotient field $K^*$, $R$ is dominated by $S$ and $S$ is
dominated by $V^*$ (i.e., $m_{V} \cap R = m_R$ and $m_{V^*} \cap S =
m_S$).

Let $\F = \nu(R \backslash \{0\})$ be the semigroup of $\G$
consisting of the values of nonzero elements of $R$. For $\gamma \in
\F$, let $I_{\gamma}=\{f\in R \mid \ \n(f)\geq \gamma\}$. A
(possibly infinite) sequence $\{ Q_i \}$ of elements of $R$ is a
{\it generating sequence} of $\nu$ \cite{Spi} if for every $\gamma
\in \F$ the ideal $I_\g$ is generated by the set
$$\{\prod_{i}{Q_i}^{a_i}\mid \ a_i\in \mathbb{N}_0,\
\sum_{i} a_i \n(Q_i)\geq \gamma\}.$$ A generating sequence of $\n$
is {\it minimal} if none of its proper subsequences is a generating
sequence of $\n$.
A generating sequence of $\n^*$ in $S$ can be defined similarly.

Generating sequences provide a very useful tool in the study of
algebraic surfaces (cf. \cite{Cossart, C&P, FJ, GK, Spi, T} and the
literature cited there).

Let $(u,v)$ be a regular system of parameters (s.o.p.) of $R$, and
let $R' = R\big[ \frac{u}{v} \big]_m$, where $m$ is a prime ideal of
$R\big[ \frac{u}{v} \big]$ such that $m\cap R=m_R$. We say that $R
\to R'$ is a {\it quadratic} transform. If furthermore $\n$
dominates $R'$ we say that $R \to R'$ is a quadratic transform {\it
along $\n$}.

The main result of \cite{GHK}, which we recall below, gives a nice
structure theorem for generating sequences of $\n$ and $\n^*$, when
$\k$ has characteristic zero. We refer to Section 2 of this paper or
to Section 2 of \cite{GHK} for the precise definition of toroidal
structure.

\begin{theorem} \cite[8.1]{GHK}\label{char0}
Let $\k$ be an algebraically closed field of characteristic $0$, and
let $K^*/K$ be a finite extension of algebraic function fields of
transcendence degree $2$ over $\k$. Let $\n^*$ be a $\k$-valuation
of $K^*$ with valuation ring $V^*$, and let $\n$ be the restriction
of $\n^*$ to $K$. Suppose that $R \subset S$ is an extension of
algebraic regular local rings with quotient fields $K$ and $K^*$
respectively, such that $V^*$ dominates $S$ and $S$ dominates $R$.
Then there exist sequences of quadratic transforms $R \to \bar{R}$
and $S \to \bar{S}$ along $\n^*$ such that $\bar{S}$ dominates
$\bar{R}$ and the map between generating sequences of $\n$ and
$\n^*$ in $\bar{R}$ and $\bar{S}$ respectively, has a toroidal
structure.
\end{theorem}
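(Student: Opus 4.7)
The plan is to reduce the problem, via the Strong Monomialization theorem of Cutkosky and Piltant, to a situation in which the extension $\bar R \subset \bar S$ is in toroidal form relative to $\n^*$, and then to match generating sequences on the two sides by a combinatorial analysis that is insensitive to the characteristic of $\k$. This replaces the characteristic-zero input used in \cite{GHK} (embedded resolution / local uniformization in char $0$) by a tool that is available in arbitrary characteristic in dimension two.

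First, I would apply sequences of quadratic transforms $R \to \bar R$ and $S \to \bar S$ along $\n^*$ so that regular systems of parameters $(u,v)$ of $\bar R$ and $(x,y)$ of $\bar S$ satisfy the conclusions of Strong Monomialization: the pullbacks of $u$ and $v$ are units in $\bar S$ times explicit monomials in $x,y$, with an integer exponent matrix of controlled determinant reflecting the ramification index and the residue extension. This gives the toroidal chart that will drive the remaining combinatorics.

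Second, I would build minimal generating sequences $\{P_i\}$ of $\n$ in $\bar R$ and $\{Q_j\}$ of $\n^*$ in $\bar S$ in parallel, using the Spivakovsky-type inductive construction \cite{Spi}: start from the regular parameters, and at each stage adjoin a new element whose value is not in the semigroup generated by the previous values. Since the extension is now toroidal, each newly produced $Q_j$ can be compared, modulo units of $\bar S$, with the image of an appropriate $P_i$ under the monomial change of coordinates; from this comparison I would read off, following Section 2, the expression of each $P_i$ as a unit times a monomial in the $Q_j$, which is exactly the toroidal structure of the map between the two generating sequences.

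I expect the main obstacle to be the treatment of valuations with a nontrivial limit component, where the generating sequences are infinite and, in positive characteristic, can receive extra Artin--Schreier-type jumps in $\bar S$ that have no counterpart over $\bar R$. The Cutkosky--Piltant monomialization already absorbs the hardest wild-ramification contributions into the exponent matrix, so the remaining task is careful bookkeeping of exponents and units: one must verify, inductively, that each new key element produced on the $\bar S$-side either matches (up to a unit and a monomial) a key element on the $\bar R$-side or else arises as a ``purely toroidal'' generator internal to $\bar S$, and that in either case the toroidal description defined in Section~2 is preserved as the construction proceeds past the jump.
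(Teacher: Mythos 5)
Your high-level skeleton --- reduce to strong monomial form via Cutkosky--Piltant and then compare key-polynomial-type generating sequences on the two sides --- does match the paper's strategy (the paper proves Theorem~\ref{intro-main} in all characteristics and then observes that the argument applies verbatim in characteristic zero, which is how it obtains Theorem~\ref{char0}). However, you describe the comparison step as ``careful bookkeeping of exponents and units,'' and this glosses over the heart of the argument. The paper first proves (Theorem~\ref{relationship}) that \emph{if} $(t,Q_k)=1$ for all $k>0$, where $Q_k=q_1\cdots q_k$ is the running product of denominators of the j-values, then the higher jumping polynomials on the two sides literally coincide, $T'_i=T_i$; and even this conditional equality already requires building the $S$-side jumping polynomials with the specific sequence of units $\{\delta^{n_{i,0}}\}_{i>0}$, precisely the device that replaces the \'etale extensions of \cite{GHK} forbidden in positive characteristic. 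Your proposed matching ``up to a unit and a monomial'' is weaker than what is used and you give no mechanism to carry it inductively past the first stage.

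The larger gap is that you offer no reason why the coprimality $(t,Q_k)=1$ should hold in the first place, and without it the matching simply fails. In the paper this is not bookkeeping but the main structural step: Strong Monomialization is invoked in its \emph{stable} form (Theorem~\ref{strongmonom}), and if $(t,q_M)\neq 1$ for some $M$, one follows the explicit blow-up ``chunks'' of Remark~\ref{chunk} up to the next admissible pair and produces new permissible parameters with $u_{s_l}=x_{s_l}^{g}\Delta$ where $g=(\bar{p}'_l,t)<t$, contradicting stability of the exponent $t$. Your remark that Cutkosky--Piltant monomialization ``already absorbs the hardest wild-ramification contributions into the exponent matrix'' is exactly where the idea is missed: Strong Monomialization fixes $t$, but the compatibility of $t$ with every denominator $q_i$ of the value semigroup is an additional fact that must be \emph{extracted} from the stability statement by a contradiction argument, not assumed as part of the bookkeeping. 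Without that step, the inductive identification of generating sequences does not close.
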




The goal of this paper is to find a toroidal structure for
generating sequences of $\n$ and $\n^*$ when $\k$ has characteristic
$\bp>0$.

Cutkosky and Piltant proved that Strong Monomialization holds in
positive characteristic, provided that $V^*/V$ is defectless
\cite[7.3, 7.35]{C&P}. The {\it defect} is an invariant of
ramification theory of valuations, and it is a power of $\bp$. We
refer the reader to Section 7.1 of \cite{C&P} for the precise
definition. We have that $V^*/V$ is defectless whenever $\G^*$ (and
$\G$) are finitely generated \cite[7.3]{C&P}. The only case in which
$\G^*$ is not finitely generated is when it is a non-discrete
subgroup of $\mathbb{Q}$. Furthermore, $V^*/V$ is always defectless
when $\k$ has characteristic zero. Strong Monomialization may not
hold if the extension $V^*/V$ has a defect. See \cite[7.38]{C&P} for
an example. Since in our work we apply Strong Monomialization, we
need to assume that $V^*/V$ is defectless.

When $\G^*$ is a non-discrete subgroup of $\mathbb{Q}$ (which is the
essential and subtle case), Strong Monomialization states that there
exist sequences of quadratic transforms $R \to R_1$ and $S \to S_1$
along $\n^*$ such that $\n^*$ dominates $S_1$, $S_1$ dominates
$R_1$, and there are regular parameters $(u,v)$ in $R_1$ and $(x,y)$
in $S_1$, such that the inclusion $R_1\subset S_1$ is given by
\begin{equation}\label{introequation}
\begin{array}{ll}
u &=   x^t \d \\
v &=  y,\\
\end{array}
\end{equation}
where $t$ is a positive integer and $\d$ is a unit in $S_1$.


Observe that we can choose $u,v \in R_1$ (resp. $x,y \in S_1$) to be
the first two members of a generating sequence of $\n$ (resp.
$\n^*$). Therefore, (\ref{introequation}) exhibits a toroidal
structure of the map between the first two elements of such
generating sequences.

The definition of toroidal structures of generating sequences of
$\n$ and $\n^*$ is given in Section \ref{statement}. Our main
theorem is stated as follows.

\begin{theorem}[Theorem \ref{monomialization}] \label{intro-main}
Let $\k$ be an algebraically closed field of characteristic $\bp>0$,
and let $K^*/K$ be a finite separable extension of algebraic
function fields of transcendence degree $2$ over $\k$. Let $\n^*$ be
a $\k$-valuation of $K^*$ with valuation ring $V^*$, and let $\n$ be
the restriction of $\n^*$ to $K$, with valuation ring $V$. Assume
that $V^*/V$ is defectless. Suppose that $R \subset S$ is an
extension of algebraic regular local rings with quotient fields $K$
and $K^*$ respectively, such that $V^*$ dominates $S$ and $S$
dominates $R$. Then there exist sequences of quadratic transforms $R
\to \bar{R}$ and $S \to \bar{S}$ along $\n^*$ such that $\bar{S}$
dominates $\bar{R}$ and the map between generating sequences of $\n$
and $\n^*$ in $\bar{R}$ and $\bar{S}$ respectively, has a toroidal
structure.
\end{theorem}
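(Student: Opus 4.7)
My plan is to mimic the architecture of the characteristic zero proof in \cite{GHK}, replacing each use of Abhyankar--Zariski monomialization by the Strong Monomialization theorem of Cutkosky and Piltant \cite[7.3, 7.35]{C&P}. The defectlessness hypothesis on $V^*/V$ is precisely what makes this substitution legitimate, so the resulting argument is genuinely characteristic free.

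First, I would split on the structure of $\G^*$. When $\G^*$ is finitely generated, the classification of generating sequences in dimension two together with Strong Monomialization yields a monomial normal form for the extension $R_1 \subset S_1$ almost immediately, and the toroidal structure can then be read off by applying this normal form to standard generating sequences of $\n$ and $\n^*$. The essential case, to which I would devote the bulk of the proof, is when $\G^*$ is a non-discrete subgroup of $\QQ$. Here Strong Monomialization produces $R \to R_1$ and $S \to S_1$ along $\n^*$ together with regular parameters $(u,v)$ of $R_1$ and $(x,y)$ of $S_1$ satisfying $u = x^t\d$, $v = y$ as in (\ref{introequation}). Taking $u,v$ and $x,y$ as the first two members of minimal generating sequences $\{P_i\}$ of $\n$ and $\{Q_j\}$ of $\n^*$, I immediately obtain the toroidal relation between the initial segments.

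The heart of the argument is then an induction on the index of the generating sequence. In dimension two, each subsequent element $P_{i+1}$ of a minimal generating sequence of a rank one non-discrete valuation is determined by a Perron type relation expressing an appropriate power of $P_i$, modulo strictly larger value, as a monomial in the previously constructed $P_j$'s, and the same applies to the $Q_j$'s. At each inductive step I would perform additional quadratic transforms along $\n^*$, reapplying Strong Monomialization to the current extension $R_i \subset S_i$ in order to keep it in monomial form, and then produce $P_{i+1}$ and $Q_{j+1}$ satisfying a monomial relation compatible with the toroidal data already built at lower indices. Compatibility of the Perron relations on the two sides is forced by the fact that $\n$ is the restriction of $\n^*$ and $\G \hookrightarrow \G^*$, combined with the monomial change of coordinates $u = x^t \d$, $v = y$.

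The main obstacle I anticipate is to show that the extra quadratic transforms needed at step $i+1$ do not spoil the toroidal form already established for the earlier elements, and that the construction stabilizes after finitely many blowups at a single pair $\bar R \subset \bar S$ whose (possibly infinite) generating sequences then inherit the toroidal structure in its entirety. In characteristic zero this stabilization is essentially automatic; in characteristic $\bp>0$ it relies crucially on the separability of $K^*/K$ together with the defectlessness of $V^*/V$, which jointly rule out the pathological residue extensions and wild ramification phenomena that would otherwise obstruct the inductive step. This is precisely the point at which the Cutkosky--Piltant theorem does the real work, and it is what allows the characteristic zero scheme of proof from \cite{GHK} to be transported essentially verbatim to positive characteristic.
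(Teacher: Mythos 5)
Your proposal correctly identifies the high-level bookkeeping (split on the structure of $\G^*$, use Strong Monomialization for the finitely generated cases, then focus on $\G^*$ a non-discrete subgroup of $\QQ$), and the stable form $u = x^t\d$, $v = y$ is indeed the starting point. But the core of your plan --- ``transport the characteristic zero scheme from \cite{GHK} essentially verbatim,'' running an induction on the index of the generating sequence and ``reapplying Strong Monomialization at each step to keep things in monomial form'' --- is exactly what the paper says cannot be done. The paper explicitly notes that the key lemma \cite[8.2]{GHK} fails in positive characteristic: the strong monomial form is not preserved under the quadratic transforms used in the characteristic zero argument. Your appeal to separability and defectlessness to ``rule out the pathological residue extensions'' at the inductive step is a wish, not an argument; the difficulty is real, and the paper handles it by an entirely different mechanism.

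What the paper actually does in the essential case is the following. It introduces a generalized notion of jumping polynomials in which a fixed sequence of units $\{\d_i\}_{i>0}$ with $\d_i\equiv 1\pmod{m_R}$ is built into the recursion; this generalization is precisely what replaces the \'etale extensions used in \cite{GHK}, which are unavailable here since they need not produce regular rings in characteristic $\bp$. With the trivial units one gets a generating sequence $\{T_i\}$ of $\n$ in $R$, and with units given by appropriate powers of $\d$ one gets a generating sequence $\{T'_i\}$ of $\n^*$ in $S$. Theorem \ref{relationship} then shows that \emph{if} $(t,Q_k)=1$ for every $k$, the two sequences coincide term by term from $i\ge 1$ on, and the toroidal structure drops out. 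The whole weight of the argument lies in proving that $(t,Q_k)=1$ must in fact hold for all $k$, and this is done by contradiction rather than by a direct induction: if some $q_M$ shares a factor with $t$, one runs the explicit sequence of quadratic transforms from Section \ref{qt} (Remarks \ref{chunk}, \ref{goodchunk}, \ref{preparedpairs} and Lemma \ref{admissibleparameters}) and lands on a prepared pair $(R_{r_l},S_{s_l})$ in which the exceptional parameters are related by $u_{r_l} = x_{s_l}^g\D$ with $g=(\bar p'_l,t)<t$ --- contradicting the stability asserted in Theorem \ref{strongmonom}. So stability is used once, globally, as a rigidity constraint that forces the arithmetic condition; it is not ``reapplied'' at each step to repair the monomial form, and in fact there is nothing to repair because the sequences of blowups are the canonical ones associated to the jumping polynomials (Theorem \ref{monoidalseq-lemma}, Remark \ref{shortchunks}). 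Your proposal, as written, has no mechanism to establish the coprimality $(t,Q_k)=1$, no substitute for the \'etale extensions of \cite{GHK}, and an inductive step that runs directly into the known failure of \cite[8.2]{GHK} in positive characteristic.
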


We prove the theorem by analyzing the different types of valuations
of $K^*$. In most cases, the result follows from a standard
application of the Strong Monomialization theorem. These cases are
analyzed in Section \ref{easycases}. The rest of the paper is
devoted to the essential case, when $\G^*$ is a non-discrete
subgroup of $\mathbb{Q}$. We will briefly describe below the main
steps of the proof in this case.

We first remark that the methods of \cite{GHK} can not be extended
to positive characteristic. The main obstruction is that the ``key
lemma'' \cite[8.2]{GHK} no longer holds; that is, the strong
monomial form may not be preserved when we apply the quadratic
transforms of \cite[8.2]{GHK}. In this paper we use the sequences of
quadratic transforms of the ``algorithm'' described in Section 7.4
of \cite{C&P}. This algorithm is recalled in Section
\ref{strongmonomialization}. The essential point is that the strong
monomial form $(\ref{introequation})$ is eventually ``stable'' along
the algorithm (see Theorem \ref{strongmonom}).

Other technical difficulties arise from the fact that we used
\'etale extensions in several places in \cite{GHK}, but such
extensions are no longer regular rings when we work in positive
characteristic. Therefore we do not use \'etale extensions in this
paper.

Let $(u,v)$ be a regular system of parameters in $R$ and let
$\{\d_i\}_{i>0}\subset R$ be a sequence of units such that the
residue of $\d_i$ is 1 for all $i>0$. In Section \ref{jumpingpoly}
we construct a sequence of {\it jumping polynomials} $\{ T_i \}_{i
\geq 0}$ in $R$ corresponding to $(u,v)$ and to the units
$\{\d_i\}_{i>0}$. By normalizing, we may assume that $\n(u) = 1$. We
let $T_0=u$ and $T_1=v$. Write $\n(v) = p_1/q_1$, where $p_1$ and
$q_1$ are coprime positive integers. For each $i \ge 1$, we define
$T_{i+1}$ recursively. Let $p_{i+1}$ and $q_{i+1}$ be the coprime
positive integers defined by
$$\n(T_{i+1}) = q_i \n(T_i) + \dfrac{1}{q_1 \dots q_i}\cdot
\dfrac{p_{i+1}}{q_{i+1}}.$$ This construction is a generalization of
the one given in \cite{GHK}, where the sequence of units was
trivial; that is, $\d_i=1$ for all $i>0$. By allowing non trivial
units in the jumping polynomials we recover some useful results of
\cite{GHK}, avoiding the use of \'etale extensions. Jumping
polynomials corresponding to a trivial sequence of units are very
similar to Favre and Jonsson's {\it key polynomials} \cite{FJ},
whereas the idea of key polynomials is originally due to MacLane
\cite{MacL}.

We observe that the above collection of jumping polynomials $\{ T_i
\}_{i\geq 0}$ forms a generating sequence of $\n$ in $R$ (Theorem
\ref{nondiscretesequence}). Furthermore, we can select a subsequence
that forms a minimal generating sequence (Theorem
\ref{nondiscreteminsequence}).

Our proof of Theorem \ref{intro-main} proceeds as follows. We may
assume that $R$ has regular parameters $(u,v)$, and $S$ has regular
parameters $(x,y)$ such that the inclusion $R\subset S$ satisfies
\begin{equation}
\begin{array}{ll}
u &=   x^t \d \\
v &=  y,\\
\end{array}
\end{equation}
where $t$ is a positive integer and $\d$ is a unit in $S$.

We consider the sequence $\{T_i\}_{i\ge 0}$ of jumping polynomials
in $R$ corresponding to $(u,v)$ and to the trivial sequence of
units. We consider the sequence $\{T'_i\}_{i\ge 0}$ of jumping
polynomials in $S$ corresponding to $(x,y)$ and to the sequence of
units given by appropriate powers of $\d$. Then $\{T'_i\}_{i\ge 0}$
forms a generating sequence of $\n^*$ in $S$.

Let $Q_k=q_1\cdots q_k$ for $k>0$. We show in Theorem
\ref{relationship} that if $Q_k$ and $t$ are relatively prime for
all $k>0$, then $T_i=T'_i$ for all $i>0$. The theorem is proved in
this case.

Otherwise we construct appropriate sequences of quadratic transforms
$R \to R'$ and $S \to S'$, such that the inclusion $R'\subset S'$
contradicts the stable form of strong monomialization. This step is
the main part of our argument, and it requires a very explicit
description of the quadratic transforms that we perform. Several
crucial preparatory results are discussed in Section \ref{qt}.

Last we remark that the proof of Theorem \ref{intro-main} applies
also when $\k$ has characteristic zero, thus providing an
alternative argument for Theorem \ref{char0}.


\section{Statement of the result} \label{statement}

Let $\k$ be an algebraically closed field of characteristic $\bp>0$
and let $K^*/K$ be a finite separable extension of algebraic
function fields of transcendence degree 2 over $\k$. Let $\n^*$ be a
$\k$-valuation of $K^*$ with valuation ring $V^*$ and value group
$\G^*$ and let $\n$ be the restriction of $\n^*$ to $K$ with
valuation ring $V$ and value group $\G$.

Suppose that $S$ is an algebraic regular local ring with quotient
field $K^*$ which is dominated by $V^*$ and $R$ is an algebraic
regular local ring with quotient field $K$ which is dominated by
$S$. We will show that there exist sequences of quadratic transforms
$R\ra R'$ and $S\ra S'$ along $\n^*$ such that $S'$ dominates $R'$
and the map between generating sequences of $S'$ and $R'$ has the
following toroidal structure (cf. Section 2 of \cite{GHK}).

\begin{itemize}

\item[(1)] If $\n^*$ is divisorial then $R'=V$ and $S'=V^*$ with
regular parameters $u\in R'$ and $x\in S'$ such that $u=x^a\g$ for
some unit $\g\in S'$ and for some positive integer $a$. We also have
that $\{u\}$ is a minimal generating sequence of $\n$ and $\{x\}$ is
a minimal generating sequence of $\n^*$.

\item[(2)] If $\n^*$ has rank 2 then there exist regular
parameters $(u,v)$ in $R'$ and $(x,y)$ in $S'$ such that $\{u,v\}$
is a minimal generating sequence of $\n$, $\{x,y\}$ is a minimal
generating sequence of $\n^*$, and
\begin{align*}
u &=x^ay^b\d\\
v &=y^d\g
\end{align*}
for some units $\d,\g\in S'$, and for some nonnegative integers $a,
b, d$ such that $ad\neq 0$.

\item[(3)] If $\n^*$ has rank 1 and rational rank 2 then there exist regular
parameters $(u,v)$ in $R'$ and $(x,y)$ in $S'$ such that $\{u,v\}$
is a minimal generating sequence of $\n$, $\{x,y\}$ is a minimal
generating sequence of $\n^*$, and
\begin{align*}
u &=x^ay^b\d\\
v &=x^cy^d\g
\end{align*}
for some units $\d,\g\in S'$, and for some nonnegative integers $a,
b, c, d$ such that $ad-bc \neq 0$.

\item[(4)] If $\G$ and $\G^*$ are non-discrete subgroups of
$\mathbb{Q}$ and $V^*/V$ is defectless, then there exist a
generating sequence $\{H_l\}_{l\ge 0}$ of $\n$ in $R'$ and regular
parameters $(x,y)$ in $S'$ such that
\begin{align*}
H_0 &=x^a\g\\
H_1 &=y
\end{align*}
for some unit $\g\in S'$ and for some positive integer $a$, and
$\{x,\{H_l\}_{l>0}\}$ is a minimal generating sequence of $\n^*$ in
$S'$.

\item[(5)] If $\n$ is discrete but not divisorial then there exist
regular parameters $(u,v)$ in $R'$ and $(x,y)$ in $S'$ such that
$\G$ is generated by $\n(u)$, $\G^*$ is generated by $\n^*(x)$, and
$u=x^a\g$ for some unit $\g\in S'$ and for some positive integer
$a$. Moreover, $R'$ has a non-minimal generating sequence
$\{u,\{T_i\}_{i>0}\}$  such that $\{x,\{T_i\}_{i>0}\}$ is a
non-minimal generating sequence in $S'$.

\end{itemize}


\section{Valuations in 2-dimensional function fields} \label{easycases}
We will prove our main theorem by analyzing the different types of
valuations of $K^*$. A similar analysis was done in \cite{GHK}
(Section 3), but we outline it below for completeness. We refer to
\cite{C&P} (Section 7.2) for the background needed in this section.
We recall that $\G$ and $\G^*$ are finitely generated except when
they are isomorphic to non-discrete subgroups of $\mathbb{Q}$.

\subsection{One dimensional valuations}
By definition, $\n^*$ is divisorial. In this case $\n$ and $\n^*$
are discrete, and $V$ and $V^*$ are iterated quadratic transforms of
$R$ and $S$ respectively (see \cite[4.4]{Ab}).

Let $u$ be a regular parameter of $V$ and let $x$ be a regular
parameter of $V^*$. Then there is a relation $$u=x^a\g$$ where
$\g\in V^*$ is a unit and $a$ is a positive integer. Since $\{ u \}$
is a mimimal generating sequence for $V$, and $\{ x \}$ is a minimal
generating sequence for $V^*$ the theorem is proved.

\subsection{Zero dimensional valuations of rational rank 2}

By \cite[7.3]{C&P} there exist sequences of quadratic transforms
$R\ra R'$ and $S\ra S'$ along $\n^*$ such that $R'$ has regular
parameters $(u,v)$, $S'$ has regular parameters $(x,y)$, and
\begin{align*}
u &=x^ay^b\d\\
v &=x^cy^d\g
\end{align*}
for some units $\d,\g\in S'$ and for some nonnegative integers $a,
b, c, d$ such that $ad-bc \neq 0$. Further, $c=0$ if $\n^*$ has rank
two. We also have that $\{\n(u), \n(v)\}$ is a rational basis of
$\G\otimes \mathbb{Q}$, and $\{\n^*(x), \n^*(y)\}$ is a rational
basis of $\G^*\otimes \mathbb{Q}$.

Fix $\g\in \F =\nu(R' \backslash \{0\})$ and let
$I_{\gamma}=\{f\in R' \mid \ \n(f)\geq \gamma\}$. If $f\in
I_{\gamma}$ we can write $f=\sum_{i\ge 1}a_iu^{b_i}v^{c_i}$, where
$a_i$ are units in $R'$, $b_i$ and $c_i$ are nonnegative integers,
and the terms have increasing value, since $\n(u)$ and $\n(v)$ are
rationally independent. It follows that $\n(f)=b_1\n(u)+c_1\n(v)$.
For $i\geq 1$ we have $b_i\n(u)+c_i\n(v)\geq
b_1\n(u)+c_1\n(v)=\n(f)\geq \g$. Therefore $f$ belongs to the ideal
generated by the set $\{{u}^{b_i}{v}^{c_i}\mid \ b_i, c_i\in
\mathbb{N}_0,\ b_i\n(u)+c_i\n(v)\geq \gamma\}$. This implies that
$\{ u,v \}$ is a generating sequence of $\n$ in $R'$. Furthermore,
it is minimal. Similarly $\{ x,y \}$ is a minimal generating
sequence of $\n^*$ in $S'$, and the theorem is proved.


\medskip

The rest of the paper will be devoted to studying the remaining
cases, that is zero dimensional valuations of rational rank 1.

\subsection{Non-discrete zero dimensional valuations of rational rank 1}

We can normalize $\G^*$ so that it is an ordered subgroup of
$\mathbb{Q}$, whose denominators are not bounded, as $\G^*$ is not
discrete. In Example 3, Section 15, Chapter VI of \cite{ZS},
examples are given of two-dimensional algebraic function fields with
value group equal to any given subgroup of the rational numbers.
This case is much more subtle.

\subsection{Discrete zero dimensional valuations of rational rank 1}

If $\n^*$ is discrete, then $\n$ is also discrete. This case will be
handled in the same way as the case of non-discrete zero dimensional
valuations of rational rank 1, but the generating sequences of
$\n^*$ and $\n$ will not be minimal.


\section{Jumping polynomials}\label{jumpingpoly}
Throughout this section we work under the assumption that the value
group of $\n$ is a subgroup of $\mathbb{Q}$ and
$\trdeg_{\k}(V/{m_{V}})=0$. We will first generalize the
construction of a sequence of jumping polynomials given in
\cite{GHK}.

Suppose that $(u,v)$ is a system of regular parameters in $R$ and
$\{\d_i\}_{i>0}\subset R$ is a sequence of units such that  the
residue of $\d_i$ is 1 for all $i>0$. Suppose also that the value
group $\G$ is normalized so that $\n(u)=1$. Let
\begin{align*}
\left\{ \begin{array}{ccc}
T_0 & = & u\\
T_1 & = & v.
\end{array} \right.
\end{align*}
Set $q_0=\infty$ and choose a pair of coprime positive integers
$(p_1,q_1)$ so that $\n(v)=p_1/q_1$. For $i \ge 1$, $T_{i+1}$ is
defined recursively as follows. Let
$$T_{i+1}=T_i^{q_i}-\l_i\d_i\prod_{j=0}^{i-1}T_j^{n_{i,j}},$$
where $n_{i,j}<q_j$ are nonnegative integers such that
$q_i\nu(T_i)=\nu(\prod_{j=0}^{i-1}T_j^{n_{i,j}})$ and $\l_i\in
\k-\{0\}$ is the residue of
${T_i^{q_i}}{(\prod_{j=0}^{i-1}T_j^{n_{i,j}})^{-1}}$.

Writing $\d_i=1+w_i$, for some $w_i\in m_R$, we notice that
$$
\n(T_{i+1})\geq \min
\{\n(T_i^{q_i}-\l_i\prod_{j=0}^{i-1}T_j^{n_{i,j}}), \n(\l_i
w_i\prod_{j=0}^{i-1}T_j^{n_{i,j}})\}>q_i\n(T_i).
$$
Therefore we can choose positive integers $p_{i+1}$ and $q_{i+1}$ so
that $(p_{i+1},q_{i+1})=1$ and
$$\n(T_{i+1})=q_i\n(T_i)+\dfrac{1}{q_1\cdots
q_i}\cdot\dfrac{p_{i+1}}{q_{i+1}}.$$

We will say that $\{T_i\}_{i\ge 0}$ is a sequence of jumping
polynomials corresponding to the regular parameters $(u,v)$ and the
sequence of units $\{\d_i\}_{i>0}$. The polynomial $T_{i}$ will be
called the $i$-th {\it jumping polynomial} and the value $\n(T_i)$
will be called the $i$-th j-{\it value}. We denote the $i$-th
j-value by $\b_i$ and we say that $\b_i$ is an {\it independent}
j-value if $q_i\neq 1$. In this case we say that $T_i$ is an
independent jumping polynomial.

It is shown in \cite[5.10]{GHK} that the sequence of jumping
polynomials corresponding to the trivial sequence of units
$\{1\}_{i>0}$ is well defined, that is, the $n_{i,j}$ above are
uniquely determined. The same considerations show that the sequence
of jumping polynomials $\{T_i\}_{i\ge 0}$ corresponding to any
sequence of units $\{\d_i\}_{i>0}\subset R$, where $(\d_i-1)\in m_R$
for all $i>0$, is well defined. Furthermore, the values of jumping
polynomials have the following properties.

\begin{remark}\label{b-inequality}
For $i>0$ denote $Q_i=q_1\cdots q_i$ and set $Q_0=1$. If $i>0$, then
\begin{itemize}
\item[1)]$\b_{i+1}=q_i\b_i+\dfrac{1}{Q_i}\cdot\dfrac{p_{i+1}}{q_{i+1}}$,

\item[2)] $Q_i\b_j$ is an integer number for all $j\le i$,
\smallskip

\item[3)]$q_{i+1}\b_{i+1}\geq\b_{i+1}>q_i\b_i\ge\b_i$,
\smallskip

\item[4)]$q_i\b_i=\sum_{j=0}^{i-1}n_{i,j}\b_j$.
\end{itemize}
\end{remark}

Assume now that $\{\b_{i_l}\}_{l\geq 0}$ is the subsequence of all
independent j-values. Let $\bar{\b}_l=\b_{i_l}$ denote the $l$-th
independent j-value, $\bar{q}_l=q_{i_l}$ and
$\bar{p}_l=(p_{i_{l-1}+1}+\dots+p_{i_l-1})\bar{q}_l+p_{i_l}$ if
$l>0$. Then the values of independent jumping polynomials have the
following properties.

\begin{remark}\label{barb-inequality}
For $l>0$ denote $\bar{Q}_l=\bar{q}_1\cdots \bar{q}_l$ and set
$\bar{Q}_0=1$. If $l>0$, then
\begin{itemize}

\item[1)]$\bar{\b}_{l+1}=\bar{q}_l\bar{\b}_l+\dfrac{1}{\bar{Q}_l}
\cdot\dfrac{\bar{p}_{l+1}}{\bar{q}_{l+1}}$ and
$\bar{\b}_1=\dfrac{\bar{p}_1}{\bar{q}_1}$,

\item[2)] $\bar{Q}_l\b_{i'}$ is an integer number for all $i'<
i_{l+1}$. In particular, $\bar{Q}_l\bar{\b}_j$ is an integer number
for all $j\le l$,
\smallskip

\item[3)]$\bar{q}_{l+1}\bar{\b}_{l+1}>\bar{\b}_{l+1}>\bar{q}_l\bar{\b}_l>\bar{\b}_l$,
\smallskip

\item[4)]$(\bar{p}_l,\bar{q}_l)=1$.
\end{itemize}
\end{remark}

Let us consider the sequence $\{H_l\}_{l\ge0}$ of all independent
jumping polynomials in $R$. For all $l\ge 0$ we have $H_l=T_{i_l}$,
in particular, $H_0=u$ and
$H_1=v-\sum_{j=1}^{i_1-1}\l_j\d_ju^{\b_j}$. Since $n_{i,j}<q_j$,
that is, $n_{i,j}=0$ whenever $T_j$ is not an independent jumping
polynomial, the recursive formula for $H_{l+1}$ with $l>0$ is
$$H_{l+1}
=H_l^{\bar{q}_l}-\l_{i_l}\d_{i_l}\prod_{j=0}^{l-1}H_j^{n_{i_l,i_j}}-
\l_{i_l+1}\d_{i_l+1}\prod_{j=0}^l
H_j^{n_{i_l+1,i_j}}-\l_{i_l+2}\,\d_{i_l+2}\prod_{j=0}^l
H_j^{n_{i_l+2,i_j}}- \dots$$ $$\dots
-\l_{i_{l+1}-1}\,\d_{i_{l+1}-1}\prod_{j=0}^l H_j^{n_{i_{l+1}-1,i_j}}
=H_l^{\bar{q}_l}-\l_{i_l}\d_{i_l}\prod_{j=0}^{l-1}H_j^{n_{i_l,i_j}}-
\sum_{i'=i_l+1}^{i_{l+1}-1}\l_{i'}\d_{i'}\prod_{j=0}^l
H_j^{n_{i',i_j}}.$$

\begin{remark}\label{normalize} In general, if $R$ is a
2-dimensional regular local ring dominated by $V$ and  $(u,v)$ is a
system of regular parameters in $R$, we may not necessarily have
$\n(u)=1$. Then in order to define a sequence of jumping polynomials
$\{T_i\}_{i\ge 0}$ corresponding to the system of regular parameters
$(u,v)$, we introduce the following valuation $\tilde\n$ of $K$
$$
\tilde\n(f)=\frac{\n(f)}{\n(u)}
$$
for all $f\in K$. Then $\tilde\n(u)=1$ and we use the construction
above with $\n$ replaced by the equivalent valuation $\tilde\n$.
This procedure is equivalent to normalizing the value group $\G$ so
that $\n(u)=1$.
\end{remark}

\section{Properties of jumping polynomials}
In this section assumptions and notations are as in Section
\ref{jumpingpoly}. Our first goal is to show that sequences of
jumping polynomials form generating sequences of valuations. See
\cite{C&P,FJ,MacL,Spi} for more considerations on this topic.

Let $\F = \n(R\backslash\{0\})$ be the semigroup of $\G$
consisting of the values of nonzero elements of $R$. For $\g\in
\F$, let $I_\g=\{f\in R|\ \n(f)\ge\g\}$. Then a possibly infinite
sequence $\{Q_i\}\subset R$ is a {\it generating sequence} of $\n$
if for every $\g\in\F$ the ideal $I_\g$ is generated by the set
$\{\prod_i{Q_i}^{a_i}| \ a_i\in\NN_0,\ \sum_{i} a_i \n(Q_i)\ge\g\}$.
A generating sequence of $\n$ is {\it minimal} if none of its proper
subsequences is a generating sequence of $\n$.

If $\g\in\F$ we denote by $\A_\g$ the ideal of $R$ generated by
$\{\prod_{j=0}^k T_j^{m_j}|\ k,m_j\in\NN_0,\\
 \sum_{j=0}^k m_j\b_j\ge\g\}$, and we denote by $\A^+_\g$ the ideal of $R$
generated by $\{\prod_{j=0}^k T_j^{m_j}|\ k,m_j\in\NN_0,\
\sum_{j=0}^k m_j\b_j>\g\}$. We observe the following basic
properties of the ideals $\A_\g$ and $\A^+_\g$.
\begin{itemize}

\item[1)] $\A_\g\subset I_\g$.

\item[2)] If $\g_1<\g_2$ then
$\A_{\g_2}\subset\A^+_{\g_1}\subset\A_{\g_1}$.

\item[3)] $\A_{\g_1}\A_{\g_2}\subset\A_{\g_1+\g_2}$ and
$\A^+_{\g_1}\A_{\g_2}\subset\A^+_{\g_1+\g_2}$.

\item[4)] Let $\b=\min(\b_0,\b_1)$. If $f\in m_R$ then
$f=uf_1+vf_2$ for some $f_1,f_2\in R$ and therefore $f\in\A_\b$.
Thus $m_R\subset\A_\b\subset\A^+_0$.

\item[5)] For any $\g\in\F$ there exists $\g'\in\F$ such that
$\A^+_\g=\A_{\g'}$. Indeed, notice that the set
$\{\a\in\F|\g<\a\le\g+\b_0\}$ is finite, since it is bounded from
above, and it is nonempty. Then set
$\g'=\min\{\a\in\F|\g<\a\le\g+\b_0\}$.

\end{itemize}

\begin{lemma}\label{reduced}
Let $f=\prod_{j=0}^{k}T_j^{m_j}$, where $k, m_0,\dots,m_k \in
\NN_0$, and let $\g=\n(f)=\sum_{j=0}^{k}m_j\b_j$. There exist
nonnegative integers $d_0, d_1,\dots, d_k$ such that $\sum_{j=0}^k
d_j\b_j=\g$ and $d_j<q_j$ for all $0\le j\le k$, a unit $\m\in R$
and $f'\in \A_{\g}^+$ such that $f=\m\prod_{j=0}^{k}T_j^{d_j}+f'$.
\end{lemma}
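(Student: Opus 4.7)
The plan is to reduce $f$ to standard form by repeatedly applying the defining relation
$$T_i^{q_i} = \l_i\d_i\prod_{j=0}^{i-1}T_j^{n_{i,j}} + T_{i+1}$$
for $i\ge 1$, pushing the higher-valuation terms into $\A_\g^+$. I would proceed by induction on the pair $(k,m_k)$ in lexicographic order, where $k$ is the largest index with $m_k>0$. The base case $k=0$ is immediate: since $q_0=\infty$, the condition $m_0<q_0$ is automatic, and one takes $d_0=m_0$, $\m=1$, $f'=0$.

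For the inductive step I would split into two cases. If $m_k<q_k$, write $f=T_k^{m_k}\cdot g$ with $g=\prod_{j=0}^{k-1}T_j^{m_j}$, apply the induction hypothesis to $g$ (whose top index is at most $k-1$) to obtain $g=\m\prod_{j<k}T_j^{d_j}+g'$ with $d_j<q_j$ and $g'\in\A^+_{\n(g)}$, and then set $d_k=m_k$ and $f'=T_k^{m_k}g'$, which lies in $\A^+_\g$ by property (3). If $m_k\ge q_k$, apply the defining relation to obtain
$$f = \l_k\d_k\cdot T_k^{m_k-q_k}\prod_{j=0}^{k-1}T_j^{m_j+n_{k,j}}\;+\;T_{k+1}\cdot T_k^{m_k-q_k}\prod_{j=0}^{k-1}T_j^{m_j}.$$
Remark \ref{b-inequality}(4) gives $q_k\b_k=\sum_{j<k}n_{k,j}\b_j$, so the first summand has valuation exactly $\g$, while the second has valuation $\g+(\b_{k+1}-q_k\b_k)>\g$ and so lies in $\A^+_\g$. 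The first summand has strictly smaller $T_k$-exponent $m_k-q_k$, so the inductive hypothesis applies to it; absorbing $\l_k\d_k$ into the resulting unit completes the step.

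The main point to be careful about is the bookkeeping at the end. One must verify that the final $\m$ is a unit (it is, being a product of the nonzero residues $\l_i$ and the units $\d_i$ collected over finitely many reduction steps), that the accumulated error $f'$ genuinely lies in $\A^+_\g$ (which follows from repeated use of properties (2) and (3) of the ideals $\A_\g$ and $\A^+_\g$), and that the final exponents satisfy $\sum_j d_j\b_j=\g$. This last identity is preserved throughout because each reduction either leaves the total valuation of the principal monomial unchanged or moves the excess into $\A^+_\g$, so the monomial $\m\prod T_j^{d_j}$ produced at the end necessarily has the same valuation $\g$ as $f$.
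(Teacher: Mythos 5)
Your argument is correct and follows the same essential strategy as the paper: both reduce $m_k$ modulo $q_k$ by applying the defining relation $T_k^{q_k}=T_{k+1}+\l_k\d_k\prod_{j<k}T_j^{n_{k,j}}$ and pushing the resulting higher-valuation terms into $\A^+_\g$ via properties (2) and (3) of those ideals, then recursing to lower index. The only cosmetic difference is the induction scheme: you peel off one factor of $T_k^{q_k}$ at a time under a lexicographic induction on $(k,m_k)$, whereas the paper writes $m_k=rq_k+d_k$ up front, expands $T_k^{rq_k}=\bigl(T_{k+1}+\l_k\d_k\prod_{j<k}T_j^{n_{k,j}}\bigr)^r$ in a single batch so that the error $h$ lands in $\A^+_{rq_k\b_k}$, and then applies ordinary induction on $k$ to $g=\prod_{j<k}T_j^{m_j+rn_{k,j}}$.
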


\begin{proof}
We apply induction on $k$. If  $k=0$ then $f=T_0^{m_0}$ is the
required presentation. If $k>0$, write $m_k=rq_k+d_k$ for some $r\ge
0$ and $0\le d_k<q_k$. Recall that
$\n(\prod_{j=0}^{k-1}T_j^{n_{k,j}})=q_k\b_k$ and
$\n(T_{k+1})>q_k\b_k$. Thus
$$
T_k^{rq_k}=(T_{k+1}+\l_k\d_k\prod_{j=0}^{k-1}T_j^{n_{k,j}})^r=
\l_k^r\d_k^r\prod_{j=0}^{k-1}T_j^{rn_{k,j}}+h,
$$
where $h\in\A^+_{rq_k\b_k}$. Furthermore, since
$d_k\b_k+\sum_{j=0}^{k-1}m_j\b_j=\g-rq_k\b_k$ we have that
$hT_k^{d_k}\prod_{j=0}^{k-1}T_j^{m_j}\in\A^+_{rq_k\b_k}\A_{\g-rq_k\b_k}
\subset\A^+_\g$.

Let $g=\prod_{j=0}^{k-1}T_j^{m_j+rn_{k,j}}$ and let $\a=\g-d_k\b_k$.
Notice that $\n(g)=\a$. Then by the inductive assumption there exist
nonnegative integers $d_0, d_1,\dots, d_{k-1}$ such that
$\sum_{j=0}^{k-1}d_j\b_j=\a$ and $d_j<q_j$ for all $0\le j\le k-1$,
a unit $\m'\in R$ and $g'\in\A^+_\a$ such that $
g=\m'\prod_{j=0}^{k-1}T_j^{d_j}+g'$. We also notice that
$g'T_k^{d_k}\in\A^+_\a\A_{d_k\b_k}\subset\A^+_\g$. Thus
\begin{align*}
f&=T_k^{d_k}(\l_k^r\d_k^r\prod_{j=0}^{k-1}T_j^{rn_{k,j}}+h)\prod_{j=0}^{k-1}T_j^{m_j}=
T_k^{d_k}(\l_k^r\d_k^rg+h\prod_{j=0}^{k-1}T_j^{m_j})=\\
&=\l_k^r\d_k^r\m'\prod_{j=0}^{k}T_j^{d_j}+\l_k^r\d_k^rg'T_k^{d_k}+
hT_k^{d_k}\prod_{j=0}^{k-1}T_j^{m_j}=\m\prod_{j=0}^{k}T_j^{d_j}+f',
\end{align*}
where $\m=\l_k^r\d_k^r\m'$ is a unit in $R$ and $f'\in\A^+_\g$.
\end{proof}

\begin{remark}
The integers $d_0, d_1,\dots, d_k$ of Lemma \ref{reduced} depend
only on $\g$: there exists a unique $(k+1)$-tuple of nonnegative
integers $d_0, d_1,\dots, d_k$ such that $\sum_{j=0}^k d_j\b_j=\g$
and $d_j<q_j$ for all $0\le j\le k$.

The statement above is equivalent to the one claiming that if
$\sum_{j=0}^k c_j\b_j=0$ for some integer coefficients
$-q_j<c_j<q_j$ then $c_j=0$ for all $0\le j\le k$. We refer the
reader to Proposition 5.8 of \cite{GHK} for the proof.
\end{remark}

\begin{lemma}\label{i=a}
If $\g\in\F$, then $I_\g=\A_\g$.
\end{lemma}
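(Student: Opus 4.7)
The containment $\A_\g \subseteq I_\g$ is property (1) from the list preceding the lemma. For the reverse, let $f \in I_\g$; the aim is to show $f \in \A_\g$. A key preliminary is that iterating property (4) yields $m_R^N \subseteq \A_{N\b} \subseteq \A_\g$ whenever $N\b \geq \g$, where $\b = \min(\b_0, \b_1)$. Fix such an integer $N_0$; then $\A_\g$ is open in the $m_R$-adic topology, and it suffices to produce $g \in \A_\g$ with $f - g \in m_R^{N_0}$.

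The plan is downward induction on $n = \ord_{m_R}(f)$. The base case $n \geq N_0$ is immediate, since then $f \in m_R^{N_0} \subseteq \A_\g$. For $n < N_0$, write $f = \phi(u, v) + r$ where $\phi \in \k[u, v]$ is homogeneous of degree $n$ (using the identification of $R/m_R^{n+1}$ with $\k[u,v]/(u,v)^{n+1}$) and $r \in m_R^{n+1}$. The argument reduces to the sub-claim that $\phi(u, v) \in \A_\g$: granting this, $r = f - \phi(u, v)$ lies in $I_\g \cap m_R^{n+1}$, and the inductive hypothesis applied to $r$ yields $r \in \A_\g$, whence $f \in \A_\g$.

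To prove the sub-claim, I would distinguish two cases. When $\n(u) \neq \n(v)$, the coprimality of $(p_1, q_1)$ forces distinct monomials $u^a v^b$ with fixed $a+b = n$ to have distinct $\n$-values; thus $\n(\phi(u, v))$ equals the minimum of these monomial values, and the hypothesis $\n(\phi) \geq \g$ forces each nonzero monomial to lie in $\A_\g$ separately, giving $\phi(u, v) \in \A_\g$. In the degenerate case $\n(u) = \n(v) = 1$ (i.e.\ $p_1 = q_1 = 1$), all degree-$n$ monomials share the common value $n$; when $\g > n$, the hypothesis $\n(\phi(u, v)) \geq \g$ demands genuine cancellation among them. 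This is handled by applying Lemma \ref{reduced} to each monomial $u^a v^b = T_0^a T_1^b$, rewriting $\phi(u, v)$ as a sum of reduced monomials in $\{T_j\}$ modulo errors in $\A^+$ of strictly higher $\n$-value, and iterating on the resulting errors.

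The main obstacle is this degenerate case. Cancellation of the low-value reduced monomials that appear in the rewriting relies on the uniqueness of the reduced presentation (the remark following Lemma \ref{reduced}), which forces the aggregate coefficient of each reduced monomial of value $< \g$ to vanish. Termination of the iterative rewriting uses property (5) (the existence of $\g' \in \F$ with $\A^+_\g = \A_{\g'}$), guaranteeing that the errors produced at each step lie in strictly higher $\A$-ideals and so eventually cross the threshold $\g$. Carefully tracking the $\n$-values and coefficients of reduced monomials through this iteration is the delicate part of the argument.
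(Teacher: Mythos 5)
The overall plan—prove $I_\g\subset\A_\g$—is the right one, but the reduction to the sub-claim is not valid. You write $f=\phi(u,v)+r$ with $\phi$ the degree-$n$ homogeneous leading form and $r\in m_R^{n+1}$, and the argument then needs $\phi\in\A_\g$. But $f\in I_\g$ does not imply $\n(\phi)\geq\g$: monomials of \emph{different} total degree can share the same $\n$-value and cancel, so the $m_R$-adic leading form of $f$ may have strictly smaller value than $f$. This is not confined to the $\n(u)=\n(v)$ case you isolate. For instance take $\n(v)=3/2$ (so $p_1=3$, $q_1=2$, $n_{1,0}=3$) and consider $f=T_2=v^2-\l_1 u^3$, which has $\n(f)=\b_2>3$; here $\ord_{m_R}(f)=2$, $\phi=v^2$, and $\n(\phi)=3<\b_2$. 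With $\g=\b_2$ you have $f\in\A_\g$ (indeed $f=T_2$ is itself a generator), while $\phi\notin\A_\g\subset I_\g$ since $\n(\phi)<\g$. The phrase ``the hypothesis $\n(\phi)\geq\g$'' is doing unearned work: it is not a hypothesis and can fail. The $m_R$-adic filtration is simply not adapted to $\n$, and the decomposition into a homogeneous part plus higher-order remainder does not preserve membership in $I_\g$ termwise.

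The paper's proof organizes the argument around the filtration $\{\A_\s\}_{\s\in\F}$ instead. Given $f\in I_\g$, it takes $\s$ maximal in $\F$ with $f\in\A_\s$ (only finitely many such $\s$ exist, since $f\in\A_\s\subset I_\s$ forces $\s\leq\n(f)$), applies Lemma \ref{reduced} to the generators in a presentation of $f\in\A_\s$ to rewrite $f=\m\prod_{j}T_j^{d_j}+h$ with $d_j<q_j$, $\sum_j d_j\b_j=\s$, and $h\in\A^+_\s$, and then observes that if $\m\in m_R$ one gets $f\in\A^+_\s=\A_{\g'}$ for some $\g'>\s$ by property (5), contradicting maximality. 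Hence $\m$ is a unit, $\n(f)=\s$ exactly, and $\s\geq\g$ gives $f\in\A_\s\subset\A_\g$. The maximality of $\s$ is precisely the device that rules out any further hidden cancellation, and no case split on $\n(u)$ versus $\n(v)$ is needed. Your toolbox (Lemma \ref{reduced}, the uniqueness of the reduced exponents, property (5)) is the right one; the fix is to replace the downward induction on $\ord_{m_R}(f)$ with the maximal-$\s$ argument tied to the $\A$-filtration.
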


\begin{proof}
We only need to check that $I_\g\subset \A_\g$ for all $\g\in\F$.

Let $\g\in\F$ and let $f\in I_\g$. We will show that $f\in\A_\g$.
First notice that if $f\in\A_\a$ for some $\a\in\F$ then
$\a\le\n(f)$. Thus the set $\W=\{\a\in\F|\,f\in\A_\a\}$ is finite
since it is bounded from above and it is nonempty since $f\in\A_0$.
We choose $\s$ to be the maximal element of $\W$. Then there exists
a presentation
$$
f=\sum_{l=1}^N g_l\prod_{j=0}^k T_j^{m_{l,j}}+f',
$$
where $\sum_{j=0}^k m_{l,j}\b_j=\s$ for all $1\le l\le N$, $g_l\in
R$ for all $1\le l\le N$, and $f'\in \A^+_\s$.

We now apply Lemma \ref{reduced} to $\prod_{j=0}^k T_j^{m_{l,j}}$
for all $1\le l\le N$. We get $ \prod_{j=0}^k
T_j^{m_{l,j}}=\m_l\prod_{j=0}^k T_j^{d_j}+h_l$, where $\m_l\in R$ is
a unit, $\sum_{j=0}^k d_j\b_j=\s$ and $h_l\in\A^+_\s$. Thus
$$
f=(\sum_{l=1}^N\m_l g_l)\prod_{j=0}^k T_j^{d_j}+\sum_{l=1}^N h_l
g_l+f'= \m\prod_{j=0}^k T_j^{d_j}+h,
$$
where $h\in\A^+_\s$. If $\m\in m_R$ then $\m\prod_{j=0}^k
T_j^{d_j}\in\A^+_0\A_\s\subset\A^+_\s$ and therefore $f\in\A^+_\s$.
Let $\a\in\F$ be such that $\A^+_\s=\A_\a$. Then $\a>\s$ and
$f\in\A_\a$, a contradiction to the choice of $\s$. So $\m$ is a
unit in $R$ and $\n(\m)=0$. Then $\n(f)=\min(\n(\m\prod_{j=0}^k
T_j^{d_j}),\n(h))=\s$. Thus we get that $\s\ge\g$, and so
$f\in\A_\s\subset\A_\g$.
\end{proof}

\begin{theorem}\label{nondiscretesequence}
$\{T_i\}_{i\ge 0}$ is a generating sequence in $R$.
\end{theorem}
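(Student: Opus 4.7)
The plan is to observe that this theorem is nothing more than a reformulation of Lemma \ref{i=a}. By the definitions recalled at the beginning of this section, $\{T_i\}_{i\ge 0}$ qualifies as a generating sequence of $\n$ in $R$ precisely when, for every $\g \in \F$, the ideal $I_\g$ coincides with the ideal generated by the set of monomials $\prod_{j} T_j^{a_j}$ satisfying $\sum_j a_j \n(T_j) \ge \g$. Since $\n(T_j) = \b_j$ by construction, this ideal is literally $\A_\g$ as introduced in the preliminary discussion. Thus the equality $I_\g = \A_\g$ supplied by Lemma \ref{i=a} is exactly the generating sequence property, and the theorem follows at once.

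Consequently there is no additional work to do here; the substantive content is already contained in Lemma \ref{reduced} (which rewrites an arbitrary monomial in the $T_j$'s as a unit multiple of a monomial with exponents bounded by $d_j < q_j$, modulo $\A^+_\g$) and in Lemma \ref{i=a} (which combines this normal form with the fact that the set $\{\a \in \F \mid \g < \a \le \g+\b_0\}$ is finite to rule out an infinite strictly increasing chain of values, yielding $I_\g \subset \A_\g$). I would therefore present the proof as a one-line invocation of Lemma \ref{i=a}, perhaps prefaced by a sentence reminding the reader that $\A_\g$ is, by definition, the ideal appearing in the definition of a generating sequence when the $Q_i$ are taken to be the $T_i$. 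No obstacle arises at this stage.
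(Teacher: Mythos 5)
Your proposal is correct and matches the paper's own proof exactly: both treat the theorem as an immediate consequence of Lemma \ref{i=a} together with the observation that $\A_\g$ is by definition the ideal appearing in the definition of a generating sequence with $Q_i = T_i$. No further comment is needed.
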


\begin{proof}
The statement follows at once from Lemma \ref{i=a} and the
definition of generating sequences.
\end{proof}

\subsection{Non-discrete case.} We will now assume that the value
group of $\n$ is not finitely generated and, therefore, the sequence
of independent jumping polynomials $\{H_l\}_{l\ge 0}$ is infinite.
If $\g\in\F$ denote by $\B_\g$ the ideal of $R$ generated by
$\{\prod_{j=0}^k H_j^{m_j}|\ k,m_j\in\NN_0,\ \sum_{j=0}^k
m_j\bar{\b}_j\ge\g\}$ and denote by $\B'_\g$ the ideal of $R$
generated by $\{\prod_{j=1}^k H_j^{m_j}| \ k,m_j\in\NN_0,\
\sum_{j=1}^k m_j\bar{\b}_j\ge\g\}$. We notice that
$\B_{\g_1}\B_{\g_2}\subset\B_{\g_1+\g_2}$,
$\B'_{\g_1}\B'_{\g_2}\subset\B'_{\g_1+\g_2}$, and
$\B'_\g\subset\B_\g\subset\A_\g$ for all $\g,\g_1,\g_2\in\F$.

\begin{lemma}\label{T-to-H}
Suppose that $\G$ is a non-discrete subgroup of $\QQ$. Then
$T_k\in\B_{\b_k}$ for all $k\ge 0$. Furthermore, if $\bar{p}_1=1$
then $H_0\in\B'_{\bar{\b}_0}$.
\end{lemma}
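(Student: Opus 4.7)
My plan is to prove the two claims in turn, both times exploiting two elementary observations: every non-independent $T_j$ has $q_j=1$, which forces $n_{i,j}=0$ and so turns each product $\prod T_j^{n_{i,j}}$ appearing in the recursion into a product of the independent polynomials $H_l$ only; and, by Remark~\ref{b-inequality}(4), every such correction term has value exactly $q_k\b_k$.

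For the first claim $T_k\in\B_{\b_k}$ I would argue by a downward induction within each chunk $[i_l,i_{l+1}]$ of indices. The endpoints are trivial: $T_{i_l}=H_l$ lies in $\B_{\bar\b_l}=\B_{\b_{i_l}}$ by the very definition of $\B$. For $i_l<k<i_{l+1}$ we have $q_k=1$, so the recursion inverts to
\[ T_k = T_{k+1} + \l_k\d_k\prod_{j=0}^{l}H_j^{n_{k,i_j}}. \]
The correction term has value $q_k\b_k=\b_k$ and so lies in $\B_{\b_k}$, while by the downward inductive hypothesis $T_{k+1}\in\B_{\b_{k+1}}\subset\B_{\b_k}$ (using $\b_{k+1}>\b_k$ from Remark~\ref{b-inequality}(3)). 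Seeding the induction at $T_{i_{l+1}}=H_{l+1}\in\B_{\bar\b_{l+1}}$ then sweeps the whole chunk.

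For the second claim assume $\bar p_1=1$, so $\bar q_1\bar\b_1=\bar\b_0=1$. I would compute $T_{i_1+1}$ in two different ways. Directly from the recursion, using that $j=0$ is the only independent index in $[0,i_1-1]$ and that matching values forces the exponent to be $n_{i_1,0}=1$, gives $T_{i_1+1}=H_1^{\bar q_1}-\l_{i_1}\d_{i_1}H_0$. Iterating the recursion downward through the chunk $(i_1,i_2]$ exactly as in the first part gives
\[ T_{i_1+1} = H_2 + \sum_{i'=i_1+1}^{i_2-1}\l_{i'}\d_{i'}H_0^{n_{i',0}}H_1^{n_{i',i_1}}. \]
Equating and rearranging produces
\[ \l_{i_1}\d_{i_1}H_0 + \sum_{i'=i_1+1}^{i_2-1}\l_{i'}\d_{i'}H_0^{n_{i',0}}H_1^{n_{i',i_1}} = H_1^{\bar q_1}-H_2, \]
and the right-hand side already lies in $\B'_{\bar\b_0}$ since $\n(H_1^{\bar q_1})=1=\bar\b_0$ and $\n(H_2)=\bar\b_2>1$.

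The crux is to show that $n_{i',0}\ge 1$ for every $i'\in(i_1,i_2)$ occurring in the sum. Given this I can factor $H_0$ out of the left-hand side, obtaining $H_0\cdot U = H_1^{\bar q_1}-H_2$ with
\[ U = \l_{i_1}\d_{i_1} + \sum_{i'=i_1+1}^{i_2-1}\l_{i'}\d_{i'}H_0^{n_{i',0}-1}H_1^{n_{i',i_1}}; \]
each non-leading term of $U$ has valuation $\b_{i'}-1>0$, so $U\equiv\l_{i_1}\d_{i_1}\pmod{m_R}$ is a unit of $R$, and $H_0 = U^{-1}(H_1^{\bar q_1}-H_2)\in\B'_{\bar\b_0}$. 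The bound $n_{i',0}\ge 1$ follows from Remark~\ref{b-inequality}(1), which gives $\b_{i_1+1}=\bar q_1\bar\b_1+p_{i_1+1}/(Q_{i_1}q_{i_1+1})>1$, combined with the strict monotonicity of $\{\b_i\}_{i\ge 1}$: writing $\b_{i'}=n_{i',0}+n_{i',i_1}/\bar q_1$ with $n_{i',i_1}<\bar q_1$ and using $\b_{i'}>1$ forces $n_{i',0}\ge 1$.
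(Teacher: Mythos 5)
Your proof is correct and follows essentially the same route as the paper's: for the first claim you invert the recursion and use that non-independent indices $j$ have $q_j=1$, hence $n_{i,j}=0$, so each correction term is a monomial in the $H_j$ of value $q_k\b_k=\b_k$; for the second claim you reach the same identity $H_1^{\bar q_1}-H_2=\l_{i_1}\d_{i_1}H_0+\sum_{i'}\l_{i'}\d_{i'}H_0^{n_{i',0}}H_1^{n_{i',i_1}}$ as in the paper, verify $n_{i',0}\ge 1$ by the same valuation comparison, and factor out $H_0$ against a unit. The only cosmetic differences are that you phrase the first part as a downward induction within each chunk $[i_l,i_{l+1}]$ rather than unrolling in one step, and you derive the identity for the second part by computing $T_{i_1+1}$ two ways instead of quoting the explicit recursion for $H_2$ from Section~\ref{jumpingpoly}.
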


\begin{proof}
We fix $k\ge 0$. Since $\{H_l\}_{l\ge 0}$ is infinite there exists
$l$ such that $i_{l-1}<k\leq i_l$. Then since $q_j=1$ for all $k\le
j<{i_l}$ we have
$$
T_{i_l}=T_k-\l_k\d_k\prod_{j=0}^{k-1}T_j^{n_{k,j}}-\l_{k+1}\d_{k+1}\prod_{j=0}^k
T_j^{n_{k+1,j}}-\dots-
\l_{i_l-1}\d_{i_l-1}\prod_{j=0}^{i_l-2}T_j^{n_{i_l-1,j}}.
$$
We notice that $n_{i,j}=0$ whenever $T_j$ is not an independent
jumping polynomial. Thus
$$
T_k=T_{i_l}+\sum_{i'=k}^{i_l-1}\l_{i'}\d_{i'}\prod_{j=0}^{i'-1}T_j^{n_{i',j}}
=H_l+\sum_{i'=k}^{i_l-1}\l_{i'}\d_{i'}\prod_{j=0}^{l-1}H_j^{n_{i',i_j}},
$$
where $\bar{\b}_l\ge\b_k$ and
$\sum_{j=0}^{l-1}n_{i',i_j}\bar{\b}_j=\n(\prod_{j=0}^{l-1}H_j^{n_{i',i_j}})
=\n(\prod_{j=0}^{i'-1}T_j^{n_{i',j}})=q_{i'}\b_{i'}=\b_{i'}\ge \b_k$
for all $k\le i'<i_l$. So $T_k\in\B_{\b_k}$.

Assume now that $\bar{p}_1=1$. Then since
$\bar{p}_1=(p_1+\dots+p_{i_1-1})\bar{q}_1+p_{i_1}$ we have $i_1=1$,
$1=\bar{p}_1=p_1$, $\bar{q}_1=q_1$ and $H_0=u$, $H_1=v$. Also
$$
H_2=v^{q_1}-\l_1\d_1u-\sum_{i'=2}^{i_2-1}\l_{i'}\d_{i'}u^{n_{i',0}}v^{n_{i',1}},
$$
where $n_{i',1}<q_1$ for all $2\le i'\le i_2-1$. Since
$\n(u^{n_{i',0}}v^{n_{i',1}})=\b_{i'}>q_1\b_1=1$ and
$\n(v^{n_{i',1}})=n_{i',1}/q_1<1$ we see that $n_{i',0}>0$ for all
$2\le i'\le i_2-1$. Thus
$$
H_0=u=(v^{q_1}-H_2)(\l_1\d_1+
\sum_{i'=2}^{i_2-1}\l_{i'}\d_{i'}u^{n_{i',0}-1}v^{n_{i',1}})^{-1}=(H_1^{q_1}-H_2)\D,
$$
where $\D$ is a unit in $R$ and $q_1\b_1=1$, $\b_2>1$. So
$H_0\in\B'_1=\B'_{\bar{\b}_0}$.
\end{proof}

\begin{lemma}\label{a=b}
Suppose that $\G$ is a non-discrete subgroup of $\QQ$. If
$\g\in\F$, then $\B_\g=\A_\g$. Furthermore, if $\bar{p}_1=1$ then
$\B'_\g=\A_\g$.
\end{lemma}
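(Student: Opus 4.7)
The plan is to prove the two inclusions $\A_\g \subset \B_\g$ and, under the hypothesis $\bar{p}_1 = 1$, $\A_\g \subset \B'_\g$; the reverse inclusions $\B'_\g \subset \B_\g \subset \A_\g$ have already been observed at the start of the subsection.

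First I would record an elementary monotonicity property: if $\a \ge \g$ then $\B_\a \subset \B_\g$ and $\B'_\a \subset \B'_\g$, since any generator on the left is automatically a generator on the right. Then, for the first inclusion, it suffices to check that each monomial generator $\prod_{j=0}^{k} T_j^{m_j}$ of $\A_\g$ lies in $\B_\g$. By Lemma \ref{T-to-H}, $T_j \in \B_{\b_j}$ for every $j \ge 0$, and the multiplicativity $\B_{\g_1}\B_{\g_2} \subset \B_{\g_1+\g_2}$ gives
$$\prod_{j=0}^{k} T_j^{m_j} \in \B_{\sum_j m_j \b_j} \subset \B_\g,$$
where the last step uses monotonicity together with the defining condition $\sum_j m_j \b_j \ge \g$.

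For the second inclusion, I would first upgrade Lemma \ref{T-to-H} to the statement $T_k \in \B'_{\b_k}$ for all $k \ge 0$. When $\bar{p}_1 = 1$, Lemma \ref{T-to-H} gives $H_0 \in \B'_{\bar\b_0}$ directly, while each $H_l$ with $l \ge 1$ trivially lies in $\B'_{\bar\b_l}$ as a generator. Substituting these into the expression
$$T_k = H_l + \sum_{i'=k}^{i_l-1}\l_{i'}\d_{i'}\prod_{j=0}^{l-1}H_j^{n_{i',i_j}}$$
obtained in the proof of Lemma \ref{T-to-H} (valid whenever $i_{l-1} < k \le i_l$), and applying multiplicativity and monotonicity of $\B'$, yields $T_k \in \B'_{\b_k}$ for $k \ge 1$; the case $k = 0$ is already settled. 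The same monomial-by-monomial argument as above, now carried out inside $\B'$, then produces $\A_\g \subset \B'_\g$.

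The main conceptual obstacle is the asymmetric role of $H_0$ in $\B'$: because $H_0$ is not among the generators used to build $\B'_\g$, the inclusion $\A_\g \subset \B'_\g$ cannot hold without some mechanism for writing $H_0$ in terms of the later $H_l$'s. The hypothesis $\bar{p}_1 = 1$ is precisely what enables this (via Lemma \ref{T-to-H}), after which the current lemma reduces to a routine multiplicativity computation.
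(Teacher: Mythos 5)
Your proof is correct and takes essentially the same approach as the paper: both parts reduce, via multiplicativity and monotonicity of the $\B$- (resp.\ $\B'$-) ideals, to the membership statements supplied by Lemma \ref{T-to-H}. The only organizational difference is that for the second assertion the paper exploits the already-established identity $\A_\g=\B_\g$ to reduce to showing $\B_\g\subset\B'_\g$, for which the generators $\prod_j H_j^{m_j}$ require only $H_l\in\B'_{\bar{\b}_l}$ for all $l$ (with $l=0$ being the content of Lemma \ref{T-to-H} when $\bar{p}_1=1$ and $l\ge 1$ trivial), whereas you re-derive the stronger statement $T_k\in\B'_{\b_k}$ for all $k$ and then argue directly from $\A_\g$; this is valid but slightly more work than the paper needs.
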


\begin{proof}
To prove that $\B_\g=\A_\g$ it suffices to show that if
$\sum_{j=0}^k m_j\b_j\geq\g$ then $\prod_{j=0}^k T_j^{m_j}\in\B_\g$.
By Lemma \ref{T-to-H} we have that $\prod_{j=0}^k
T_j^{m_j}\in\prod_{j=0}^k\B_{\b_j}^{m_j}\subset\B_\g$.

Now assume that $\bar{p}_1=1$ and consider $\prod_{j=0}^k H_j^{m_j}$
with $\sum_{j=0}^k m_j{\bar{\b}_j}\geq\g$. By Lemma \ref{T-to-H} we
have that $\prod_{j=0}^k
H_j^{m_j}\in\prod_{j=0}^k(\B'_{\bar{\b}_j})^{m_j}\subset\B'_\g$.
Thus $\B'_\g=\B_\g=\A_\g$.
\end{proof}

\begin{theorem}\label{nondiscreteminsequence}
If $\G$ is a non-discrete subgroup of $\QQ$ then
$\{H_l\}_{l\ge 0}$ forms a generating sequence in $R$. Moreover, if
$\bar{p}_1\neq 1$ then $\{H_l\}_{l\ge 0}$ is a minimal generating
sequence in $R$ and if $\bar{p}_1=1$ then $\{H_l\}_{l>0}$ forms a
minimal generating sequence in $R$.
\end{theorem}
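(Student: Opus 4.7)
The plan is to deduce the generating sequence statement as an immediate corollary of Lemmas \ref{i=a} and \ref{a=b}, and to prove minimality by reducing it to a purely arithmetic statement about the semigroup spanned by the values $\bar{\b}_l$. The generating sequence claim is quick: by Lemma \ref{a=b} we have $\B_\g=\A_\g$ for every $\g\in\F$, and by Lemma \ref{i=a} we have $I_\g=\A_\g$, so $I_\g=\B_\g$, which is exactly the statement that $\{H_l\}_{l\ge 0}$ is a generating sequence of $\n$. In the case $\bar{p}_1=1$, the additional identity $\B'_\g=\A_\g$ from Lemma \ref{a=b}, together with the fact that $\B'_\g$ is generated by monomials involving only $\{H_j\}_{j>0}$, shows that $\{H_l\}_{l>0}$ is a generating sequence as well.

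For minimality, I would assume for contradiction that the proper subsequence obtained by deleting some $H_{l_0}$ remains a generating sequence. Then $H_{l_0}$, which has value $\bar{\b}_{l_0}$, must lie in the ideal $J$ generated by monomials $\prod H_j^{m_j}$ with $j\ne l_0$ (and $j>0$ in the case $\bar{p}_1=1$) whose $\n$-value is at least $\bar{\b}_{l_0}$. Writing $H_{l_0}=\sum c_iP_i$ with $c_i\in R$ and $P_i$ such a monomial, the equality $\n(H_{l_0})=\bar{\b}_{l_0}$ forces some term $c_iP_i$ with $\n(c_iP_i)=\bar{\b}_{l_0}$; since $\n(c_i)\ge 0$ and $\n(P_i)\ge\bar{\b}_{l_0}$, this gives $\n(c_i)=0$ and $\n(P_i)=\bar{\b}_{l_0}$. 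In other words, $\bar{\b}_{l_0}$ would have to be realized as a nonnegative integer combination of the other $\bar{\b}_j$'s, and the proof reduces to ruling this out.

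I would handle this arithmetic obstruction in two cases. For $l_0>0$: Remark \ref{barb-inequality}(3) forces $m_j=0$ for $j>l_0$, collapsing the relation to $\sum_{j<l_0}m_j\bar{\b}_j=\bar{\b}_{l_0}$. Multiplying by $\bar{Q}_{l_0-1}$, the left side is an integer by Remark \ref{barb-inequality}(2), while the right side equals $\bar{Q}_{l_0-1}\bar{q}_{l_0-1}\bar{\b}_{l_0-1}+\bar{p}_{l_0}/\bar{q}_{l_0}$ by Remark \ref{barb-inequality}(1); the first summand is integral but $\bar{p}_{l_0}/\bar{q}_{l_0}$ is not, since $\gcd(\bar{p}_{l_0},\bar{q}_{l_0})=1$ and $\bar{q}_{l_0}\ge 2$, giving a contradiction. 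For $l_0=0$ with $\bar{p}_1\ne 1$: since $\bar{p}_1\ge 2$, Remark \ref{barb-inequality}(3) gives $\bar{\b}_j\ge\bar{\b}_2>\bar{q}_1\bar{\b}_1=\bar{p}_1\ge 2>1=\bar{\b}_0$ for all $j\ge 2$, forcing $m_j=0$ for $j\ge 2$. The relation then becomes $m_1\bar{p}_1=\bar{q}_1$, which combined with $\gcd(\bar{p}_1,\bar{q}_1)=1$ and $\bar{q}_1\ge 2$ forces $\bar{p}_1=1$, contradicting our hypothesis.

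The main obstacle I anticipate is the $l_0=0$ case, where the uniform denominator trick used for $l_0>0$ is unavailable and one must argue directly from the growth estimate $\bar{\b}_2>\bar{p}_1$ together with the coprimality $\gcd(\bar{p}_1,\bar{q}_1)=1$; this is exactly the place where the dichotomy on the value of $\bar{p}_1$ in the theorem statement becomes essential. Everything else is a routine unpacking of the structural data already encoded in Remarks \ref{b-inequality} and \ref{barb-inequality}.
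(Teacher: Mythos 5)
Your proof is correct and follows essentially the same route as the paper's: deduce the generating-sequence claim directly from Lemmas \ref{i=a} and \ref{a=b}, then prove minimality by an arithmetic argument showing that $\bar{\b}_{l_0}$ cannot lie in the semigroup generated by the other j-values, split into the cases $l_0>0$ (where a denominator $\bar{q}_{l_0}\ge 2$ obstructs integrality) and $l_0=0$ with $\bar p_1\neq 1$ (where coprimality obstructs it). The only cosmetic difference is that the paper cites \cite[5.6]{GHK} to get $\bar\G_k=(1/\bar Q_k)\ZZ$ and takes for granted that a generating sequence's values generate $\F$, whereas you carry out the $\bar Q_{l_0-1}$-denominator computation by hand and explicitly unpack the standard valuation argument showing that some term in the expansion of $H_{l_0}$ must attain the minimal value.
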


\begin{proof}
It follows from Lemmas \ref{i=a} and \ref{a=b} that $I_\g=\B_\g$ for
all $\g\in\F$. Thus $\{H_l\}_{l\ge 0}$ is a generating sequence
for $\n$. If $\bar{p}_1=1$ then we have $I_\g=\B'_\g$ for all
$\g\in\F$. Thus in this case $\{H_l\}_{l>0}$ forms a generating
sequence for $\n$.

To prove the statement about minimality we introduce the following
notation: for $k\ge 0$ denote by $\bar{\G}_k$ the group generated by
$\{\bar{\b}_j\}_{j=0}^k$, denote by $\F_k$ the semigroup generated
by $\{\bar{\b}_j\}_{j=0}^k$, and denote by $\F_{\hat{k}}$ the
semigroup generated by
$\{\bar{\b}_j\}_{j=0}^{k-1}\cup\{\bar{\b}_j\}_{j>k}$. We will prove
first that if $k>0$, then $\bar{\b}_k\notin\F_{\hat{k}}$. Therefore,
$\F\neq \F_{\hat{k}}$ for all $k>0$.

It is shown in \cite[5.6]{GHK}  that $\bar{\G}_k=(1/\bar{Q}_k)\ZZ\ $
for all $k\ge 0$. Thus $\bar{\G}_{k-1}\neq\bar{\G}_k$ and we have
$\bar{\b}_k\notin\bar{\G}_{k-1}$. So $\bar{\b}_k\notin\F_{k-1}$. On
the other hand $\bar{\b}_{k+j}>\bar{\b}_k$ for all $j,k>0$, so if
$\bar{\b}_k\in\F_{\hat{k}}$ for some $k>0$ then
$\bar{\b}_k\in\F_{k-1}$. Thus $\bar{\b}_k\notin\F_{\hat{k}}$ for all
$k>0$.

It follows that if some subsequence $\H=\{H_{l_j}\}_{j\ge 0}$ is a
generating sequence for $\n$ then $\{H_l\}_{l>0}\subset\H$, since
$\{\n(H_{l_j})\}_{j\ge 0}$ needs to generate $\F$.

Assume now that $\bar{p}_1\neq 1$. Then $\bar{\b}_0$ is not a
multiple of $\bar{\b}_1$ and for all $j\ge 2$ we have $\bar{\b}_j\ge
\bar{\b}_2>\bar{q}_1\bar{\b_1}=\bar{p}_1>1=\bar{\b}_0$. Thus
$\bar{\b}_0\notin\F_{\hat{0}}$ and, therefore, any generating
sequence $\{H_{l_j}\}_{j\ge 0}$ has to contain $H_0$.
\end{proof}

\subsection{Discrete case.}
Suppose now that the value group of $\n$ is isomorphic to $\ZZ$.
Then by \cite{Spi} (p. 154) every generating sequence of $\n$ is
infinite and there are no minimal generating sequences in $R$. In
our construction we will be mostly concerned with the situation when
$R$ has a system of regular parameters $(u,v)$ such that $\n(u)$
generates $\G$. In this case sequences of jumping polynomials in $R$
have the following property.

\begin{theorem}\label{discrete}
Suppose that $\G\cong\ZZ$ and that $\b_0$ generates $\G$.
Then any infinite subsequence $\{T_{i_j}\}_{j\ge 0}$ containing
$T_0$ is a generating sequence in $R$.
\end{theorem}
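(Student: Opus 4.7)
The plan is to show that for every $\g\in\F$, the ideal $I_\g$ is generated by those monomials $\prod_l T_{i_l}^{a_l}$ in the subsequence $\H=\{T_{i_j}\}_{j\ge 0}$ whose $\n$-value is at least $\g$. By Lemma~\ref{i=a} we have $I_\g=\A_\g$, so it suffices to prove that every generating monomial $M=\prod_{a=0}^k T_a^{m_a}$ of $\A_\g$ lies in the ideal generated by such subsequence monomials of value $\ge\g$.

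The first step is to extract the structural consequence of the hypothesis that $\b_0$ generates $\G\cong\ZZ$. After the normalization of Remark~\ref{normalize} we may assume $\b_0=1$, so every $\b_i$ must be a positive integer. The recursion $\b_{i+1}=q_i\b_i+\frac{1}{Q_i}\cdot\frac{p_{i+1}}{q_{i+1}}$ of Remark~\ref{b-inequality}, together with the coprimality $(p_{i+1},q_{i+1})=1$, then implies inductively that $q_i=1$ for all $i\ge 1$. Since $n_{i,j}<q_j=1$ for $j\ge 1$, this forces $n_{i,j}=0$ for such $j$, and combined with $q_i\b_i=\sum_j n_{i,j}\b_j$ we obtain $n_{i,0}=\b_i$. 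The jumping polynomial recursion therefore collapses for $i\ge 1$ to
$$T_{i+1}=T_i-\l_i\d_i u^{\b_i},$$
which iterates to the telescoping identity
$$T_i=T_{i'}+\sum_{l=i}^{i'-1}\l_l\d_l u^{\b_l}\qquad\text{for all }i'\ge i\ge 1,$$
every summand on the right having $\n$-value $\ge\b_i$ by Remark~\ref{b-inequality}(3).

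For the second step, fix a generating monomial $M=\prod_{a=0}^k T_a^{m_a}$ of $\A_\g$. For each index $a$ with $1\le a\le k$ set $a^+:=\min\{i_j:i_j\ge a\}$, which is well-defined because $\H$ is infinite (and $a^+=a$ when $a\in\{i_j\}$). Replace each factor $T_a$ (for $1\le a\le k$) in $M$ by $T_{a^+}+\sum_{l=a}^{a^+-1}\l_l\d_l u^{\b_l}$, and expand. Each resulting term is a monomial whose factors lie in $\{u\}\cup\{T_{a^+}:1\le a\le k\}\subseteq\H$, using $u=T_{i_0}\in\H$. Moreover, by the value bound above, every term in the expansion of a single $T_a^{m_a}$ has $\n$-value $\ge m_a\b_a$, so every monomial produced by fully expanding $M$ has $\n$-value $\ge\sum_a m_a\b_a=\n(M)\ge\g$. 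Thus $M$ lies in the ideal generated by monomials in $\H$ of value $\ge\g$, which proves the theorem.

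The main difficulty is the structural collapse of the first step: the discreteness of $\G$ forces $q_i=1$ for all $i\ge 1$, turning the recursive definition of $T_{i+1}$ into a pure telescoping relation whose ``remainder'' terms $\l_l\d_l u^{\b_l}$ are monomials in $u=T_0\in\H$. After this observation, the substitution and value bookkeeping are routine, and the hypothesis $T_0\in\H$ is used precisely to absorb these remainders.
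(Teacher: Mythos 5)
Your proof is correct and follows essentially the same route as the paper: both derive the structural collapse ($q_i=1$, $n_{i,0}=\b_i$, $n_{i,j}=0$ for $j>0$) from the hypothesis that $\b_0$ generates $\G\cong\ZZ$, establish the telescoping identity $T_k=T_{i_j}+\sum_{l=k}^{i_j-1}\l_l\d_l u^{\b_l}$ by choosing the next subsequence index at or after $k$, and combine Lemma~\ref{i=a} with value bookkeeping to absorb the remainder monomials in $u=T_0$. The only cosmetic difference is that the paper packages the argument through the auxiliary ideals $\A'_\g$ and the containment $T_k\in\A'_{\b_k}$, while you expand the monomial directly.
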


\begin{proof} We will follow the same line of arguments as in Lemmas
\ref{T-to-H} and \ref{a=b}. If $\g\in\F$ denote by $\A'_\g$ the
ideal of $R$ generated by $\{\prod_{j=0}^k T_{i_j}^{m_j}|\
k,m_j\in\NN_0, \sum_{j=0}^k m_j\b_{i_j}\ge\g\}$. We will show that
$\A'_\g=\A_\g$ for all $\g\in\F$. Then Lemma \ref{i=a} implies
that $\{T_{i_j}\}_{j\ge 0}$ is a generating sequence of $\n$.

We notice first that since $\G$ is generated by $\b_0=1$, for all
$i>0$ we have $\b_i\in\NN$ and $q_i=1$. It follows that
$n_{i,0}=\b_i$ and $n_{i,i'}=0$ for all $0<i'<i$.

Fix $k\ge 0$. Since $\{T_{i_j}\}_{j\ge 0}$ is infinite there exists
$j$ such that $i_{j-1}<k\leq i_j$. Then we have
$$
T_k=T_{i_j}+\l_{i_j-1}\d_{i_j-1}T_0^{\b_{i_j-1}}+\dots+\l_k\d_kT_0^{\b_k},
$$
where $\n(T_{i_j})=\b_{i_j}\ge\b_k$ and
$\n(T_0^{\b_{i'}})=\b_{i'}\ge\b_k$ for all $k\le i'\le i_j-1$. Thus
$T_k\in\A'_{\b_k}$. To prove that $\A'_\g=\A_\g$ it suffices to show
that if $\sum_{j=0}^k m_j\b_{j}\geq\g$ then $\prod_{j=0}^k
T_{j}^{m_j}\in\A'_\g$. This is true since $\prod_{j=0}^k
T_{j}^{m_j}\in\prod_{j=0}^k(\A'_{\b_{j}})^{m_j}\subset\A'_\g$.

\end{proof}

\medskip

Our next goal is to understand the relationship between jumping
polynomials in $R$ and $S$ when the inclusion $R\subset S$ satisfies
the conclusions of the Strong Monomialization theorem. We fix
regular parameters $(u,v)$ in $R$. Let $\{T_i\}_{i\ge 0}$ be the
sequence of jumping polynomials in $R$ corresponding to $(u,v)$ and
the trivial sequence of units. For all $i>0$ let $(p_i,q_i)$ be the
pair of coprime integers defined in the construction of the jumping
polynomials, let $\l_i$ be the scalar and for $0\le j<i$ let
$n_{i,j}$ be the powers defined in the construction of
$\{T_i\}_{i\ge 0}$. We also fix regular parameters $(x,y)$ in $S$
and a unit $\d\in S$ such that $(\d-1)\in m_S$. Let $\{T'_i\}_{i\ge
0}$ be the sequence of jumping polynomials in $S$ corresponding to
$(x,y)$ and the sequence of units $\{\d^{n_{i,0}}\}_{i>0}$. For all
$i>0$ let $(p'_i,q'_i)$ be the pair of coprime integers defined in
the construction of the jumping polynomials, let $\l'_i$ be the
scalar and for $0\le j<i$ let $n'_{i,j}$ be the powers defined in
the construction of $\{T'_i\}_{i\ge 0}$.

\begin{theorem}\label{relationship}
With notations as above, suppose that the inclusion of
$2$-dimensional regular local rings $R\subset S$ satisfies the
equation
\begin{align*}
u & =x^t\d\\
v & =y,
\end{align*}
where $t$ is a positive integer. If $(t,Q_k)=1$ for some $k>0$ then
\begin{itemize}

\item[1)] $T'_i=T_i$ for all $0<i\le k+1$;

\item[2)] $q'_i=q_i$ and $p'_i=tp_i$ for all $0<i\le k$;

\item[3)] $n'_{i,j}=n_{i,j}$ for all $0<j<i\le k$ and
$n'_{i,0}=tn_{i,0}$ for all $0<i\le k$.

\end{itemize}
\end{theorem}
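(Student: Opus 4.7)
The key observation is that the two jumping-polynomial constructions, although carried out in different rings, use the same underlying valuation on $R$ up to a scaling factor. By Remark \ref{normalize}, the construction of $\{T_i\}$ is carried out with $\tilde\n$ satisfying $\tilde\n(u)=1$, while the construction of $\{T'_i\}$ is carried out with $\tilde{\n^*}$ satisfying $\tilde{\n^*}(x)=1$; since $u=x^t\d$ with $\d$ a unit, one has $\n^*(u)=t\n^*(x)$, and hence $\tilde{\n^*}(f)=t\,\tilde\n(f)$ for every $f\in R$. In particular $\tilde{\n^*}(T_0)=\tilde{\n^*}(u)=t$ while $\tilde{\n^*}(T'_0)=\tilde{\n^*}(x)=1$, which is precisely where the factor $t$ appearing in parts 2) and 3) comes from. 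I would isolate this identity at the outset, since it is the only bridge between the two sets of valuation data.

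The plan is then to argue by a single simultaneous induction on $m=0,1,\dots,k$, establishing the block of claims $(1_m)$ $T'_i=T_i$ for $1\le i\le m+1$, $(2_m)$ $q'_i=q_i$ and $p'_i=tp_i$ for $1\le i\le m$, and $(3_m)$ $n'_{i,j}=n_{i,j}$ for $0<j<i\le m$ together with $n'_{i,0}=tn_{i,0}$ for $1\le i\le m$. The base case $m=0$ is immediate from $T'_1=y=v=T_1$. For the inductive step, with $T'_{m+1}=T_{m+1}$ already granted by $(1_m)$, the scaling identity together with the defining equation $\tilde\n(T_{m+1})=q_m\tilde\n(T_m)+(1/Q_m)(p_{m+1}/q_{m+1})$ and $(2_m)$ gives $(1/Q'_m)(p'_{m+1}/q'_{m+1})=(1/Q_m)(tp_{m+1}/q_{m+1})$, whence $p'_{m+1}/q'_{m+1}=tp_{m+1}/q_{m+1}$; the hypothesis $(t,Q_k)=1$ forces $(t,q_{m+1})=1$ for $m+1\le k$ and therefore $q'_{m+1}=q_{m+1}$, $p'_{m+1}=tp_{m+1}$. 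The exponents $n'_{m+1,j}$ are then pinned down by the uniqueness statement recorded after Lemma \ref{reduced}: the candidate tuple $n'_{m+1,0}=tn_{m+1,0}$, $n'_{m+1,j}=n_{m+1,j}$ for $j>0$ satisfies both the value relation $q'_{m+1}\tilde{\n^*}(T'_{m+1})=\sum_{j=0}^{m}n'_{m+1,j}\tilde{\n^*}(T'_j)$ (where the factor $t$ at $j=0$ cancels against $\tilde{\n^*}(T'_0)=1$ versus $\tilde{\n^*}(T_0)=t$) and the size bounds $n'_{m+1,j}<q'_j$, so uniqueness yields $(3_m)$ at index $m+1$.

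To close the induction one must verify $T'_{m+2}=T_{m+2}$, which reduces to the computation
\begin{align*}
\d^{n_{m+1,0}}\prod_{j=0}^{m}(T'_j)^{n'_{m+1,j}}
&=\d^{n_{m+1,0}}x^{tn_{m+1,0}}\prod_{j=1}^{m}T_j^{n_{m+1,j}}\\
&=(x^t\d)^{n_{m+1,0}}\prod_{j=1}^{m}T_j^{n_{m+1,j}}=\prod_{j=0}^{m}T_j^{n_{m+1,j}},
\end{align*}
which forces the residues $\l'_{m+1}$ and $\l_{m+1}$ to be computed from the same fraction $T_{m+1}^{q_{m+1}}/\prod_{j=0}^{m}T_j^{n_{m+1,j}}$, hence $\l'_{m+1}=\l_{m+1}$ and $T'_{m+2}=T_{m+2}$. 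The main subtle point, and the reason the auxiliary units $\{\d^{n_{i,0}}\}_{i>0}$ were built into the construction of $\{T'_i\}$ in the first place, is precisely that these units absorb the extra factor of $\d$ produced by the substitution $u=x^t\d$: without them the monomials built from the $T'_j$ would already differ from their $R$-counterparts at $i=2$, and no inductive comparison would be possible. The coprimality assumption $(t,Q_k)=1$ enters exactly once per inductive step, to ensure that multiplying $p_i/q_i$ by $t$ cancels no factor in the denominator.
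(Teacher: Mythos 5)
Your proof is correct and follows essentially the same route as the paper: you both normalize via a valuation scaled by $t$, induct along the construction, use uniqueness of the bounded-exponent representation of $q_i\beta_i$ to pin down the $n'_{i,j}$, and conclude $\lambda'_{i}=\lambda_i$ from the residue-$1$ unit $\delta^{n_{i,0}}$. The only (cosmetic) divergence is the direction of the uniqueness argument: you verify the candidate tuple $(tn_{i,0},n_{i,1},\dots)$ in $S$ and invoke uniqueness there, whereas the paper divides the $S$-relation by $t$ and invokes uniqueness in $R$ (which then also requires noting that $n'_{i,0}/t$ is integral) — both are fine.
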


\begin{proof}
We may assume that $\n(u)=\n^*(u)=1$. Then $\n^*(x)=1/t$ and in
order to construct a sequence of jumping polynomials corresponding
to the regular parameters $(x,y)$ in $S$ we define the following
valuation $\tilde\n$ of $K^*$:
$$
\tilde\n(f)=t\n^*(f) \text { for all } f\in K^*.
$$
We have $T_0=u$, $T'_0=x$, $T_1=v$ and $T'_1=y$. Thus $T_0=\d
{T'_0}^t$ and $T_1=T'_1$.

Suppose that $(t,q_1)=1$. The coprime integers $p'_1$ and $q'_1$ are
such that ${p'_1}/{q'_1}=\tilde\n(y)=t\n^*(v)=tp_1/q_1$. Since
$(tp_1,q_1)=1$ we get $p'_1=tp_1$ and $q'_1=q_1$. Since
$n_{1,0}=p_1$ and $n'_{1,0}=p'_1$ we also have $n'_{1,0}=tn_{1,0}$.
Furthermore, ${T'_1}^{q'_1}=T_1^{q_1}$ and
$\d^{n_{1,0}}{T'_0}^{n'_{1,0}}=T_0^{n_{1,0}}$. Then, since the
residue of $\d$ is 1 and
${T'_1}^{q'_1}{T'_0}^{-n'_{1,0}}=T_1^{q_1}T_0^{-n_{1,0}}\d^{n_{1,0}}$,
taking the residue of both sides of the above equality we get that
$\l'_1=\l_1$. Thus $T'_2=T_2$ and the statement is proved for $k=1$.

We apply induction on $k$. Suppose that $(t,Q_k)=1$. Then
$(t,Q_{k-1})=1$ and by the inductive assumption we have that
$q'_{k-1}=q_{k-1}$, $Q'_{k-1}=Q_{k-1}$ and $T'_j=T_j$ for all
$0<j\le k$. The coprime integers $p'_k$ and $q'_k$ satisfy the
following equality
$$
{p'_k}/{q'_k}=Q'_{k-1}(\tilde\n(T'_k)-q'_{k-1}\tilde\n(T'_{k-1}))=t
Q_{k-1}(\n^*(T_k)-q_{k-1}\n^*(T_{k-1}))={tp_k}/{q_k}.
$$
Since $(t,q_k)=1$ we get $p'_k=tp_k$ and $q'_k=q_k$. Since
$\tilde\n(T'_0)=1$, by the construction of jumping polynomials
$\{T'_i\}_{i\ge 0}$ in $S$ we get
$$
q_k\n^*(T_k)=
t^{-1}q'_k\tilde\n(T'_k)=t^{-1}\sum_{j=0}^{k-1}n'_{k,j}\tilde\n(T'_j)=
n'_{k,0}/t+\sum_{j=1}^{k-1}n'_{k,j}\n^*(T_j).
$$
On the other hand by the construction of jumping polynomials
$\{T_i\}_{i\ge 0}$ in $R$ we get that
$q_k\n^*(T_k)=\sum_{j=0}^{k-1}n_{k,j}\n^*(T_j)$ and this
representation is unique. Therefore, $n'_{k,0}=tn_{k,0}$ and
$n'_{k,j}=n_{k,j}$ for all $0<j<k$.

Finally, ${T'_k}^{q'_k}=T_k^{q_k}$ and
$\d^{n_{k,0}}\prod_{j=0}^{k-1}{T'_j}^{n'_{k,j}}=\prod_{j=0}^{k-1}T_j^{n_{k,j}}$.
Then, since the residue of $\d$ is 1 and
${T'_k}^{q'_k}(\prod_{j=0}^{k-1}{T'_j}^{n'_{k,j}})^{-1}=
T_k^{q_k}(\prod_{j=0}^{k-1}T_j^{n_{k,j}})^{-1}\d^{n_{k,0}}$, taking
the residue of both sides of the above equality we get that
$\l'_k=\l_k$. Thus $T'_{k+1}=T_{k+1}$ and the theorem is proved.

\end{proof}


\section{Behavior of jumping polynomials under
blow-ups}\label{behavior}

Throughout this section we work under the assumption that the value
group of $\n$ is a subgroup of $\mathbb{Q}$ and
$\trdeg_{\k}(V/{m_{V}})=0$.

We now introduce the notations used in the rest of the paper.

If $p$ and $q$ are positive integers such that $(p,q)=1$, the
Euclidian algorithm for finding the greatest common divisor of $p$
and $q$ can be described as follows:
\begin{align*}
r_0 &=f_1r_1+r_2\\
r_1 &=f_2r_2+r_3\\
&\vdots\\
r_{N-2} &=f_{N-1}r_{N-1}+1\\
r_{N-1} &=f_N\cdot 1,
\end{align*}
where $r_0=p$, $r_1=q$ and $r_1>r_2>\dots>r_{N-1}>r_N=1$. Denote by
$N=N(p,q)$ the number of divisions in the Euclidian algorithm for
$p$ and $q$, and by $f_1,\,f_2,\dots,f_N$ the coefficients in the
Euclidian algorithm for $p$ and $q$. Define $F_i=f_1+\dots+f_i$ and
$\e(p,q)=F_N=f_1+\dots+f_N$, $f_1(p,q)=f_1=\left[p/q \right]$.

Suppose that $R$ is a 2-dimensional regular local ring dominated by
$V$ and $E$ is a nonsingular irreducible curve on $\spec R$. Let
$$
R=R_0\ra R_1\ra R_2\ra\dots\ra R_i\ra\dots
$$
be the sequence of quadratic transforms along $\n$. We denote by
$\p_i$ the map $\spec R_i\ra\spec R$ and by $E_i$ the reduced simple
normal crossing divisor $\p_i^{-1}(E)_{red}$. We say that $R_i$ is
{\it free} if $E_i$ has exactly one irreducible component. For a
free ring $R_i$ and a regular parameter $u_i\in R_i$ we will say
that $u_i$ is an exceptional coordinate if $u_i$ is supported on
$E_i$. A system of parameters $(u_i,v_i)$ of a free ring $R_i$ is
called {\it permissible} if $u_i$ is an exceptional parameter. The
next lemma gives a description of the sequence of quadratic
transforms of $R$ along $\n$. See Section 6 of \cite{GHK} for the
proof.

\begin{lemma}\label{transformation}
Suppose that $R$ is a free ring and $(u,v)$ is a permissible system
of parameters in $R$ such that $\n(v)/\n(u)=p/q$ for some coprime
integers $p$ and $q$. Let $k=\e(p,q)$, $f_1=f_1(p,q)$ and let $a$
and $b$ be nonnegative integers such that $a\le p$, $b<q$, and
$aq-bp=1$. Then the sequence of quadratic transforms along $\n$
\begin{equation}\label{GSV}
R=R_0\ra R_1\ra\dots\ra R_{f_1}\ra R_{f_1+1}\ra\dots\ra R_{k-1}\ra
R_k
\end{equation}
has the following properties:
\begin{itemize}

\item[1)] $R_0,R_1,\dots,R_{f_1}$ and $R_k$ are free rings.

\medskip

\item[2)] Non-free rings appear in {\rm (\ref{GSV})} if and only
if $k>f_1$, that is, if $q\neq 1$.

\noindent In this case $R_{f_1+1},\dots,R_{k-1}$ are non-free.

\medskip

\item[3)] $R_k$ has a permissible system of coordinates
$$(X,Y)=\left(\dfrac{x^a}{y^b},\dfrac{y^q}{x^p}-c\right),$$ where
$c\in\k$ is the residue of $y^q/x^p$, and $\n^*(X)=\n^*(x)/q$.
Moreover, $x=X^q(Y+c)^b$, $y=X^p(Y+c)^a$.
\end{itemize}
\end{lemma}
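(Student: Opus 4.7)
My approach is induction on $k = \e(p,q)$, mirroring the Euclidean algorithm step by step. For the base case $k = 1$, coprimality forces $p = q = 1$, so $\n(u) = \n(v)$: a single quadratic transform produces $R_1$ with permissible parameters $(u, v/u - c)$, where $c \in \k$ is the residue of $v/u$. Since the strict transform of $E = \{u = 0\}$ lies in the opposite chart of the blowup, $R_1$ is free; with $(a,b) = (1,0)$ the claimed formula $(X,Y) = (u^a/v^b, v^q/u^p - c)$ is immediate.

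For the inductive step with $k > 1$, I would split on whether $p > q$ or $p < q$. When $p > q$, the first quadratic transform takes us into the chart $(u, v/u)$, and $R_1$ remains free because the strict transform of $\{u = 0\}$ lies entirely in the other chart; the new ratio is $(p-q)/q$, and $\e(p-q, q) = k - 1$, so the inductive hypothesis applies to $R_1 \ra \cdots \ra R_k$. If $(a', b')$ are the coefficients produced for the pair $(p-q, q)$, the relation $a'q - b'(p-q) = 1$ translates into $a = a' + b'$, $b = b'$; substituting $(u_1, v_1) = (u, v/u)$ into the inductively-supplied parameters of $R_k$ collapses to $(u^a/v^b, v^q/u^p - c)$, and the free/non-free counts shift by exactly one index.

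The delicate case is $p < q$: here $f_1 = 0$ and the first blowup uses the chart $(u/v, v)$, introducing a second exceptional component (the strict transform of $\{u = 0\}$, now cut out by $u/v$), so $R_1$ is non-free. I would handle this by establishing a parallel statement for a non-free starting ring whose two exceptional divisors are the coordinate divisors of its permissible parameters, and then invoking that statement for $R_1$ with the reduced pair $(p, q-p)$, which has $\e = k - 1$. Unwinding these substitutions yields the claimed $(X, Y)$ for $R_k$, and the ring becomes free again precisely at the moment the Euclidean algorithm terminates at remainder $1$.

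The main obstacle is the bookkeeping through the non-free intermediate stages: one must verify at each chart switch (after step $f_1$, after step $f_1 + f_2$, and so on) that the exceptional components combine consistently and that the accumulated monomial exponents assemble into the unimodular form $X = u^a/v^b$ with $aq - bp = 1$. Once this is in place, the value computation $\n(u^a/v^b) = (aq - bp)\n(u)/q = \n(u)/q$ immediately yields $\n^*(X) = \n^*(u)/q$, and the inversions $u = X^q(Y+c)^b$, $v = X^p(Y+c)^a$ follow from $(p,q) = 1$ by a short direct calculation.
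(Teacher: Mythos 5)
The paper does not actually prove Lemma~\ref{transformation}; it cites Section~6 of \cite{GHK} for the argument, so a direct line-by-line comparison is not possible. That said, your plan --- induction on $k=\e(p,q)$, tracking one Euclidean-algorithm step per blowup, with the free rings appearing while $f_1$ decrements and a chart switch producing the non-free stretch --- is the standard continued-fraction analysis of a length-$k$ quadratic sequence, and it is almost certainly what \cite{GHK} does. The base case, the $p>q$ inductive step (including the unimodular bookkeeping $a=a'+b'$, $b=b'$ and the identity $v_1^{q}/u_1^{p-q}=v^{q}/u^{p}$ that transports the constant $c$ unchanged), the value computation $\n(X)=\n(u)/q$, and the inversion formulas are all correct.

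The one place your sketch stops short of a proof is exactly the place you flag: the ``parallel statement'' for a non-free ring. Note that the paper only defines \emph{permissible} parameters for \emph{free} rings, so you would first have to set up an appropriate replacement notion for a ring whose exceptional locus $E_i$ has two components --- say, a system of parameters $(u_i,v_i)$ with both $\{u_i=0\}$ and $\{v_i=0\}$ exceptional --- and then formulate and prove by a parallel induction that after $\e(p_i,q_i)$ further blowups (where $\n(v_i)/\n(u_i)=p_i/q_i$) one lands in a free ring with parameters of the stated monomial form. Your claim that ``the ring becomes free again precisely when the Euclidean algorithm terminates at remainder~$1$'' is the right mechanism (the valuation center leaves the strict transform of one of the two exceptional components exactly when the ratio reaches $1/1$ and the next blowup subtracts the nonzero residue), but it is an assertion here rather than a verification. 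Filling in that companion lemma is the remaining work; the overall strategy is sound.
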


\begin{definition}\cite[7.8]{C&P}
A permissible system of parameters $(u,v)$ of a free ring $R$ is
called {\it admissible} if $\n(v)$ is maximal among all regular
systems of parameters containing $u$.
\end{definition}

\begin{lemma}\label{permissible-admissible}
Suppose that $(u,v)$ is a permissible system of parameters of a free
ring $R$ and $\n(v)/\n(u)=p/q$ for some coprime integers $p$ and
$q$. Then $(u,v)$ are admissible if and only if $q\neq 1$.
\end{lemma}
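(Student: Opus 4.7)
The plan is to handle the two directions separately. The easy direction is $q = 1 \Rightarrow (u,v)$ is not admissible: if $\n(v) = p\n(u)$ with $p$ a positive integer, then since the residue field of $V$ is $\k$ the residue of $v/u^p$ is some nonzero $\l \in \k$, and I would take $v' := v - \l u^p$. Then $(u, v')$ is still a regular system of parameters (the generated ideal equals $(u,v) = m_R$) and $\n(v') > \n(v)$, so $\n(v)$ is not maximal.

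For the converse, suppose $q \neq 1$ and let $(u, v')$ be any other regular system of parameters. First I would write $v' = \a v + \b u$ with $\a$ a unit in $R$ and $\b \in R$: such a representation exists because $v' \notin (u) + m_R^2$ (otherwise $\{u, v'\}$ would not span $m_R/m_R^2$). Then $\n(v') \geq \min\{\n(v),\, \n(\b) + \n(u)\}$, and $\n(v') > \n(v)$ is only possible when both quantities agree, i.e.\ $\n(\b) = \frac{p-q}{q}\,\n(u)$. Since this value is negative when $p < q$, that case is immediate, and the proof reduces to showing that no $\b \in R$ realises the value $\frac{p-q}{q}\n(u)$ when $p > q$ and $q \neq 1$.

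This non-existence is the main obstacle. My plan is to work in the $m_R$-adic completion $\hat R = \k[[u, v]]$, expand $\b = \sum c_{ij} u^i v^j$, and define a weighted order by $\w(u^iv^j) := iq + jp$, so that $\n(u^iv^j) = \frac{\w}{q}\n(u)$ on monomials, and hence $\n(\b) \geq \frac{\w(\b)}{q}\,\n(u)$. The combinatorial input is that, because $\gcd(p,q) = 1$, any two distinct non-negative integer solutions of $iq + jp = d$ differ by a multiple of $(p, -q)$, forcing $d \geq pq$; therefore each weight class with $d < pq$ contains at most one monomial, and no cancellation between weight-homogeneous parts can occur below weight $pq$.

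The argument then concludes as follows: if $\n(\b) \leq \frac{p-q}{q}\n(u)$ then $\w(\b) \leq p - q < p$, so the unique minimal-weight monomial of $\b$ must have $j = 0$ (any $j \geq 1$ forces weight $\geq p$), and hence is of the form $c\,u^i$ with $iq \leq p - q$. There is no cancellation with higher-weight contributions, so $\n(\b) = i\,\n(u)$ is an integer multiple of $\n(u)$, contradicting the fact that $\frac{p-q}{q}$ is not an integer when $q \neq 1$. This rules out the existence of $\b$ and completes the converse.
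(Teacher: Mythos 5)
Your proof is correct, and the argument for the ``hard'' direction ($q\neq 1\Rightarrow$ admissible) takes a genuinely different route than the paper's. The paper invokes the Weierstrass Preparation theorem from Zariski--Samuel: writing $\gamma v' = v - P(u)$ with $P\in\k[[u]]$ of order $n\geq 1$, it observes that $\n(v')>\n(v)$ forces $n\n(u)=\n(v)$, so $\n(v)/\n(u)=n\in\ZZ$ and hence $q=1$. You instead write $v'=\a v+\b u$, isolate the only problematic case $\n(\b)=\frac{p-q}{q}\n(u)$, and kill it with a weighted-order argument: the weight $\w(u^iv^j)=iq+jp$ detects $\n$, weights below $pq$ admit at most one monomial (by $\gcd(p,q)=1$), so the minimal-weight term cannot cancel, and since it must have $j=0$ its value is an integer multiple of $\n(u)$ --- impossible since $\frac{p-q}{q}\notin\ZZ$. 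Both arguments ultimately rest on ``small values of power series in $u,v$ whose $v$-part is suppressed are multiples of $\n(u)$,'' which is morally the same content as Weierstrass Preparation, but yours is more hands-on and avoids citing an external theorem. One small point to tighten when writing this up: the passage from the expansion $\b=\sum c_{ij}u^iv^j$ in $\hat R$ to the inequality $\n(\b)\geq\frac{\w(\b)}{q}\n(u)$ (and the subsequent equality when $\w(\b)<pq$) should not implicitly assume that $\n$ extends to $\hat R$; it is cleaner to truncate to a finite sum modulo a high power of $m_R$, note the remainder has value strictly larger than $\frac{p-q}{q}\n(u)$, and argue on the finite piece, where the unique-minimal-monomial observation gives the exact value. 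With that made explicit, the proof is complete.
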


\begin{proof}
Assume by contradiction that $q=1$. Then $\n(v)/\n(u)=p$. Denote by
$c$ the residue of $vu^{-p}$ and set $v'=v-cu^p$. Then $(u,v')$ are
permissible parameters with $\n(v')>\n(v)$. So $(u,v)$ are not
admissible.

Now assume by contradiction that $(u,v)$ are not admissible. Then
there exists $v'$ such that $(u,v')$ are permissible parameters and
$\n(v')>\n(v)$. By the Weierstrass Preparation theorem (Theorem 5,
Section 1, Chapter VII \cite{ZS}), we have that $\g v'=v-P(u)$,
where $\g$ is a unit, and $P\in \k[[u]]$ has order $n\geq 1$. Since
$\n(v')>\n(v)$, it follows that $n\n(u)=\n(v)$, and so
$\n(v)/\n(u)=n$. Thus $p=n$ and $q=1$, a contradiction.
\end{proof}

\begin{lemma}\label{samevalue}
Suppose that $(u,v)$ and $(\bar{u},\bar{v})$ are admissible systems
of parameters of a free ring $R$. Then $\n(u)=\n(\bar{u})$ and
$\n(v)=\n(\bar{v})$.
\end{lemma}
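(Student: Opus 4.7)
The plan is to exploit the uniqueness of the exceptional parameter in a free ring and then apply the maximality condition in the definition of admissibility symmetrically to both pairs.

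First I would prove $\n(u) = \n(\bar u)$. Since $R$ is free, the reduced exceptional divisor on $\spec R$ has exactly one irreducible component, namely a regular curve through the closed point. By definition an exceptional parameter is a regular parameter supported on this component, so it must generate the height-one prime ideal defining the component. Consequently $u$ and $\bar u$ generate the same principal prime ideal of $R$, which forces $\bar u = \e u$ for some unit $\e \in R$, and therefore $\n(u) = \n(\bar u)$.

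Next I would verify that cross-pairing preserves permissibility. Since $m_R = (\bar u, \bar v) = (\e u, \bar v) = (u, \bar v)$, the pair $(u, \bar v)$ is a regular system of parameters of $R$; because $u$ is exceptional, this pair is also permissible. The same argument shows that $(\bar u, v)$ is a permissible system of parameters. Applying the admissibility of $(u,v)$ to the permissible pair $(u, \bar v)$ then yields $\n(v) \geq \n(\bar v)$, while applying the admissibility of $(\bar u, \bar v)$ to the permissible pair $(\bar u, v)$ yields $\n(\bar v) \geq \n(v)$. The two inequalities combine to give $\n(v) = \n(\bar v)$.

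I do not anticipate any real obstacle here. The one place that merits care is the justification that an exceptional parameter is unique up to multiplication by a unit, but this is immediate once one unwinds the definition of a free ring: the support condition pins down the associated prime ideal, and the two parameters then necessarily generate a common principal ideal. The rest of the argument is a pure play-off between the definitions of permissible and admissible systems of parameters.
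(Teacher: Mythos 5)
Your proof is correct and takes essentially the same approach as the paper: both arguments first note that $u$ and $\bar{u}$ differ by a unit since they are supported on the same exceptional curve, and then use the cross-paired permissible systems $(u,\bar{v})$ and $(\bar{u},v)$ together with the symmetric application of admissibility to conclude $\n(v)=\n(\bar{v})$.
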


\begin{proof}
Since $u$ and $\bar{u}$ are supported on the same curve in $\spec R$
we have $\bar{u}=u\g$ for some unit $\g\in R$; in particular
$\n(u)=\n(\bar{u})$. The equality $\bar{u}=u\g$ also implies that
$(\bar{u},v)$ is a permissible system of parameters in $R$. Since
$(\bar{u},\bar{v})$ are admissible parameters we have
$\n(v)\le\n(\bar{v})$. Symmetrically, $\n(\bar{v})\le\n(v)$. Thus
$\n(v)=\n(\bar{v})$.
\end{proof}

\begin{remark}
If the value group $\G$ of $\n$ is a non-discrete subgroup of
$\mathbb{Q}$, it follows from \cite[7.7]{C&P} that an admissible
system of parameters of $R$ always exists. If $\G$ is a discrete
subgroup of $\mathbb{Q}$, after performing a sequence of quadratic
transforms along $\n$ we may assume that the value of the
exceptional parameter $u$ generates $\G$. It follows from Lemma
\ref{permissible-admissible} that $R$ does not have an admissible
system of parameters.
\end{remark}

We now fix regular parameters $(u,v)$ of $R$ and assume that $(u,v)$
is a permissible system of parameters in $R$ by setting $E$ to be
the curve on $\spec R$ defined by $u=0$. Recall the definition of
$\bar{p}_l$ and $\bar{q}_l$ given in Section \ref{jumpingpoly}. In
what follows, for all $l>0$ let $a_l$ and $b_l$ be nonnegative
integers such that $a_l\bar{q}_l-b_l\bar{p}_l=1$ and
$a_l\le\bar{p}_l$, $b_l<\bar{q}_l$. Let $\bar{k}_0=0$ and
$\bar{k}_l=\bar{k}_{l-1}+\e(\bar{p}_l,\bar{q}_l)$. Also set $k_0=0$
and $k_i=k_{i-1}+\e(p_i,q_i)$ if $i>0$.

The next theorem describes the images of independent jumping
polynomials under blowups of $R$ along $\n$.

\begin{theorem} \label{monoidalseq-lemma}
With notations as above, let $ R=R_0\ra R_1\ra\dots\ra
R_{\bar{k}_1-1}\ra R_{\bar{k}_1}\ra R_{\bar{k}_1+1}\ra\dots\ra
R_{\bar{k}_l}\ra\dots$ be the sequence of quadratic transforms along
$\n$. Assume that the value group of $\n$ is a non-discrete subgroup
of $\mathbb{Q}$. Then for all $l\ge 0$, $R_{\bar{k}_l}$ is free and
has an admissible system of parameters $(u_l,v_l)$ such that

\begin{itemize}

\item[1)]$u_l$ is an exceptional parameter and $\n(u_l)=1/\bar{Q}_l,$

\item[2)]$v_l=H_{l+1}/\prod_{j=0}^{l-1}H_j^{n_{i_l,i_j}}$ is the
strict transform of $H_{l+1}$ in $R_{\bar{k}_l}$ and
$\n(v_l)=(1/\bar{Q}_l)\cdot(\bar{p}_{l+1}/\bar{q}_{l+1}),$

\item[3)]for all $0 \le j \le l$ there exists a unit $\g_{j,l}\in
R_{\bar{k}_l}$ such that $H_j=u_l^{\bar{Q}_l\bar{\b}_j} \g_{j,l}$.
\end{itemize}
\end{theorem}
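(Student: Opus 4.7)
I plan to prove the theorem by induction on $l$. For the base case $l=0$, set $(u_0,v_0)=(u,H_1)$: then $R_{\bar{k}_0}=R$ is free by construction of $E$, $\n(u_0)=1=1/\bar{Q}_0$, and $H_0=u_0=u_0^{\bar{Q}_0\bar{\b}_0}\cdot 1$ verifies (3) at $j=0$. The element $H_1$ either equals $v$ or equals $v$ minus an element of $(u)$, so $(u,H_1)$ is a regular system of parameters. Its value ratio $\n(H_1)/\n(u)=\bar{\b}_1=\bar{p}_1/\bar{q}_1$ is in lowest terms (Remark~\ref{barb-inequality}(4)) with $\bar{q}_1>1$ (by independence), so Lemma~\ref{permissible-admissible} yields admissibility; properties (1) and (2) also follow immediately.

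For the inductive step, assume the conclusion at level $l-1$. Apply Lemma~\ref{transformation} to the free ring $R_{\bar{k}_{l-1}}$ with its admissible parameters $(u_{l-1},v_{l-1})$ and the coprime pair $(\bar{p}_l,\bar{q}_l)$. This produces the chain of $\e(\bar{p}_l,\bar{q}_l)$ quadratic transforms landing in a free $R_{\bar{k}_l}$ with permissible parameters $X=u_{l-1}^{a_l}/v_{l-1}^{b_l}$, $Y=v_{l-1}^{\bar{q}_l}/u_{l-1}^{\bar{p}_l}-c$, and the inverse relations $u_{l-1}=X^{\bar{q}_l}(Y+c)^{b_l}$, $v_{l-1}=X^{\bar{p}_l}(Y+c)^{a_l}$. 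I set $u_l:=X$, so $\n(u_l)=1/\bar{Q}_l$ (property~(1)). For property~(3) with $j\le l-1$, I substitute these inverse relations into the inductive $H_j=u_{l-1}^{\bar{Q}_{l-1}\bar{\b}_j}\g_{j,l-1}$ and absorb $(Y+c)^{b_l\bar{Q}_{l-1}\bar{\b}_j}$ into a new unit $\g_{j,l}$. For $j=l$, I use $H_l=v_{l-1}\prod_{j=0}^{l-2}H_j^{n_{i_{l-1},i_j}}$ together with the identity $\bar{p}_l+\bar{Q}_l\bar{q}_{l-1}\bar{\b}_{l-1}=\bar{Q}_l\bar{\b}_l$ (which follows from Remarks~\ref{b-inequality}(4) and~\ref{barb-inequality}(1)) to deduce $H_l=u_l^{\bar{Q}_l\bar{\b}_l}\g_{l,l}$.

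Next I define $v_l:=H_{l+1}/\prod_{j=0}^{l-1}H_j^{n_{i_l,i_j}}$ and verify it is the strict transform of $H_{l+1}$ by noting from property~(3) and Remark~\ref{b-inequality}(4) that $\prod_{j=0}^{l-1}H_j^{n_{i_l,i_j}}=u_l^{\bar{Q}_l\bar{q}_l\bar{\b}_l}\cdot(\text{unit})$, giving~(2); the value $\n(v_l)=(1/\bar{Q}_l)(\bar{p}_{l+1}/\bar{q}_{l+1})$ follows from $\n(H_{l+1})=\bar{\b}_{l+1}$ and Remark~\ref{barb-inequality}(1). The crucial step is to verify that $(u_l,v_l)$ is a regular system of parameters, i.e.\ $v_l\notin(u_l)$. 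Using the recursion for $H_{l+1}$ and property~(3), I rewrite
\[
v_l=(Y+c)\eta-\l_{i_l}\d_{i_l}-\sum_{i'=i_l+1}^{i_{l+1}-1} u_l^{\bar{Q}_l(\b_{i'}-\bar{q}_l\bar{\b}_l)}\cdot(\text{unit}),
\]
where $\eta$ is the unit assembled from the $\g_{j,l-1}$'s and the identity $v_{l-1}^{\bar{q}_l}/u_{l-1}^{\bar{p}_l}=Y+c$ in $R_{\bar{k}_l}$. Reducing modulo $(u_l)=(X)$ kills the sum (each term carries a positive $X$-power since $\b_{i'}>\bar{q}_l\bar{\b}_l$ by Remark~\ref{b-inequality}(3)), sends $\d_{i_l}$ to $1$ (since $u,v\in(X)R_{\bar{k}_l}$ by property~(3)), and sends $\eta$ to its constant term $\eta_0$. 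The definition of $\l_{i_l}$ as the residue of $H_l^{\bar{q}_l}/\prod_{j<i_l}T_j^{n_{i_l,j}}$ translates into $c\eta_0=\l_{i_l}$, so $v_l\equiv\eta_0 Y\pmod{X}$ with $\eta_0\neq 0$; thus $(u_l,v_l)$ generates $m_{R_{\bar{k}_l}}$, and admissibility follows from Lemma~\ref{permissible-admissible} since $\bar{q}_{l+1}>1$.

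The main obstacle is this residue-matching step: one must extract the unit $\eta$ explicitly enough from the nested products of $\g_{j,l-1}$'s to see that its constant term equals $\l_{i_l}/c$, so that the leading piece $(Y+c)\eta-\l_{i_l}$ collapses to a nonzero multiple of $Y$ modulo $X$. This equality is dictated by the very normalization used to define $\l_{i_l}$, but cleanly invoking it requires carefully accounting for the $(Y+c)^{?}$ factors introduced by Lemma~\ref{transformation}, as well as for the higher-value corrections from the $\d$-units in the jumping-polynomial recursion.
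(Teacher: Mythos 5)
Your proposal is correct and follows essentially the same route as the paper's proof: induction on $l$ via Lemma~\ref{transformation}, establishing (1) and (3) by substituting the inverse monomial relations, and then proving $(u_l,v_l)$ is a regular system of parameters by rewriting the defining recursion for $H_{l+1}$ in $R_{\bar{k}_l}$ and matching residues so that the constant terms cancel, leaving $v_l\equiv(\text{unit})\cdot Y\pmod{u_l}$. Your ``$(Y+c)\eta$'' plays exactly the role of the paper's $(z_l+c_l)\tau_l$ with $\tau_l=\tau_{l-1,l-1}^{\bar{q}_l}\tau_{l,l-1}^{-1}$, and your identity $c\eta_0=\lambda_{i_l}$ is exactly the paper's $\lambda_{i_l}=c_l t_l$ obtained by comparing residues; the ``obstacle'' you flag is handled correctly in your own outline.
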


\begin{proof}
We first show that 3) implies that for all $1 \le m \le l+1$ and for
all $i_l<i'<i_{l+1}$ there exist units $\t_{m,l}$ and $\t'_{i',l}$
in $R_{\bar{k}_l}$ such that
$$
\prod_{j=0}^{m-1}H_j^{n_{i_m,i_j}}=
u_l^{\bar{Q}_l\bar{q}_m\bar{\b}_m}\t_{m,l}\quad\text{ and }\quad
\prod_{j=0}^l H_j^{n_{i',i_j}}= u_l^{\bar{Q}_l\b_{i'}}\t'_{i',l}.
$$

Indeed, the following lines of equalities hold
$$
\prod_{j=0}^{m-1}H_j^{n_{i_m,i_j}}=\prod_{j=0}^{m-1}(u_l^{\bar{Q}_l\bar{\b}_j}
\g_{j,l})^{n_{i_m,i_j}}=u_l^{\bar{Q}_l\sum_{j=0}^{m-1}n_{i_m,i_j}\bar{\b}_j}
\prod_{j=0}^{m-1}\g_{j,l}^{n_{i_m,i_j}}=u_l^{\bar{Q}_l\bar{q}_m\bar{\b}_m}\t_{m,l}
$$
and
$$
\prod_{j=0}^lH_j^{n_{i',i_j}}=
\prod_{j=0}^l(u_l^{\bar{Q}_l\bar{\b}_j}\g_{j,l})^{n_{i',i_j}}=
u_l^{\bar{Q}_l\sum_{j=0}^ln_{i',i_j}\bar{\b}_j}
\prod_{j=0}^l\g_{j,l}^{n_{i',i_j}}=
u_l^{\bar{Q}_lq_{i'}\b_{i'}}\t'_{i',l}=u_l^{\bar{Q}_l\b_{i'}}\t'_{i',l}.
$$

We will prove the theorem by induction on $l$. For $l=0$ the system
of parameters $(u_0, v_0)=(u, H_1)$ in $R$ clearly satisfies all the
conclusions. Assume now that the statement holds for $l-1$. Then by
Lemma \ref{transformation} the ring $R_{\bar{k}_l}$ is free and has
a permissible system of parameters $(u_l,z_l)$ such that
$$
u_l=\dfrac{u_{l-1}^{a_l}}{v_{l-1}^{b_l}},\quad
\n(u_l)=\dfrac{1}{\bar{Q}_l}\quad\text{and}\quad z_l=
\dfrac{v_{l-1}^{\bar{q}_l}}{u_{l-1}^{\bar{p}_l}}-c_l,
$$
where $c_l\in\k-\{0\}$ is the residue of
${v_{l-1}^{\bar{q}_l}}{u_{l-1}^{-\bar{p}_l}}$. It follows that $u_l$
satisfies conclusion 1). We will now show that $u_l$ also satisfies
conclusion 3).

If $j\le l-1$ then
$$
H_j=u_{l-1}^{\bar{Q}_{l-1}\bar{\b}_j}\g_{j,l-1}=
(u_l^{\bar{q}_l}(z_l+c_l)^{b_l})^{\bar{Q}_{l-1}\bar{\b}_j}\g_{j,l-1}=
u_l^{\bar{Q}_l\bar{\b}_j}\g_{j,l}
$$
and
\begin{align*}
H_l=v_{l-1}\prod_{j=0}^{l-2}H_j^{n_{i_{l-1},i_j}} & =v_{l-1}
u_{l-1}^{\bar{Q}_{l-1}\bar{q}_{l-1}\bar{\b}_{l-1}}\t_{l-1,l-1}=\\
 & =u_l^{\bar{p}_l}u_l^{\bar{Q}_l\bar{q}_{l-1}\bar{\b}_{l-1}}
(z_l+c_l)^{a_l+b_l\bar{Q}_{l-1}\bar{q}_{l-1}\bar{\b}_{l-1}}\t_{l-1,l-1}=
u_l^{\bar{Q}_l\bar{\b}_l}\g_{l,l},
\end{align*}
where $\g_{j,l}$ and $\g_{l,l}$ are units in $R_{\bar{k}_l}$.

Let us set $v_l=H_{l+1}/\prod_{j=0}^{l-1}H_j^{n_{i_l,i_j}}$. Then
$\n(v_l)=\bar{\b}_{l+1}-\bar{q_l}\bar{\b}_l=
(1/\bar{Q}_l)\cdot(\bar{p}_{l+1}/\bar{q}_{l+1})$.

It only remains to show that $(u_l,v_l)$ is an admissible system of
parameters in $R_{\bar{k}_l}$. To this end we will present $v_l$ in
terms of $u_l$ and $z_l$. We have that
$$
v_l=\dfrac{H_l^{\bar{q}_l}}{\prod_{j=0}^{l-1}H_j^{n_{i_l,i_j}}}
-\l_{i_l}\d_{i_l}-\sum_{i'=i_l+1}^{i_{l+1}-1}\l_{i'}\d_{i'}
\dfrac{\prod_{j=0}^lH_j^{n_{i',i_j}}}{\prod_{j=0}^{l-1}H_j^{n_{i_l,i_j}}}.
$$

Denote by $\t_l$ the unit $\t_{l-1,l-1}^{\bar{q}_l}\t_{l,l-1}^{-1}$
in $R_{\bar{k}_{l-1}}$ and denote by $t_l$ the residue of $\t_l$.
Then the following line of equalities holds
$$
\dfrac{H_l^{\bar{q}_l}}{\prod_{j=0}^{l-1}H_j^{n_{i_l,i_j}}}=
\dfrac{v_{l-1}^{\bar{q}_l}u_{l-1}^{\bar{Q}_l\bar{q}_{l-1}\bar{\b}_{l-1}}
\t_{l-1,l-1}^{\bar{q}_l}}
{u_{l-1}^{\bar{Q}_{l-1}\bar{q}_l\bar{\b}_l}\t_{l,l-1}}=
\dfrac{v_{l-1}^{\bar{q}_l}}{u_{l-1}^{\bar{p}_l}}\t_l=(z_l+c_l)\t_l=
z_l\t_l+c_lt_l+u_lw_l,
$$
where $w_l\in R_{\bar{k}_l}$ is such that $u_lw_l=c_l(\t_l-t_l)$,
and the existence of $w_l$ is guaranteed by the inclusion
$(\t_l-t_l)\in m_{R_{\bar{k}_{l-1}}}R_{\bar{k}_{l-1}}\subset
u_lR_{\bar{k}_l}$. We also notice that taking the residues of both
sides of this equality gives $\l_{i_l}=c_lt_l$. Furthermore, since
$(\d_{i_l}-1)\in m_RR\subset u_lR_{\bar{k}_l}$ we have
$\l_{i_l}\d_{i_l}=\l_{i_l}+u_lw'_l$ for some $w'_l\in
R_{\bar{k}_l}$.

For $i_l< i'<i_{l+1}$ denote by $P_{i'}$ the positive integer
$\bar{Q}_l(\b_{i'}-\bar{q_l}\bar{\b}_l)=p_{i_l+1}+p_{i_l+2}+\dots+p_{i'}$.
Then
\begin{align*}
\sum_{i'=i_l+1}^{i_{l+1}-1}\l_{i'}\d_{i'}
\dfrac{\prod_{j=0}^lH_j^{n_{i',i_j}}}{\prod_{j=0}^{l-1}H_j^{n_{i_l,i_j}}}
&=
\sum_{i'=i_l+1}^{i_{l+1}-1}\l_{i'}\d_{i'}\dfrac{u_l^{\bar{Q}_l\b_{i'}}\t'_{i',l}}
{u_l^{\bar{Q}_l\bar{q}_l\bar{\b}_l}\t_{l,l}} =
\sum_{i'=i_l+1}^{i_{l+1}-1}\l_{i'}\d_{i'}\t'_{i',l}\t_{l,l}^{-1}u_l^{P_i'}\\
&= u_l
\sum_{i'=i_l+1}^{i_{l+1}-1}\l_{i'}\d_{i'}\t'_{i',l}\t_{l,l}^{-1}u_l^{P_i'-1}=
u_lw''_l,
\end{align*}
where $w''_l\in R_{\bar{k}_l}$. In particular, $w''_l=0$ if
$i_{l+1}=i_l+1$.

Combining all the above we get
$v_l=z_l\t_l+u_lw_l-u_lw'_l-u_lw''_l$. Thus $(u_l,v_l)$ form a
system of regular parameters in $R_{\bar{k}_l}$. Moreover,
$(u_l,v_l)$ is an admissible system by Lemma
\ref{permissible-admissible}, since
$\n(v_l)/\n(u_l)=\bar{p}_{l+1}/\bar{q}_{l+1}$ and $\bar{q}_{l+1}\neq
1$. This completes the proof of the theorem.
\end{proof}

Using the same line of arguments with $H_l$ replaced by $T_i$ and
$\bar{k}_l$ replaced by $k_i$, we obtain the following property of
the sequence of jumping polynomials in $R$.

\begin{remark}(see also \cite[7.5]{GHK}).\label{shortchunks}
Let $ R=R_0\ra R_1\ra\dots\ra R_{k_1-1}\ra R_{k_1}\ra
R_{k_1+1}\ra\dots\ra R_{k_i}\ra\dots$ be the sequence of quadratic
transforms along $\n$. Then for all $i>0$, $R_{k_i}$ is free and has
a system of regular parameters $(u_i,v_i)$ such that

\begin{itemize}

\item[1)]$u_i$ is an exceptional parameter and $\n(u_i)=1/Q_i,$

\item[2)]$v_i=T_{i+1}/\prod_{j=0}^{i-1}T_j^{n_{i,j}}$ is the strict
transform of $T_{i+1}$ in $R_{k_i}$ and
$\n(v_i)=(1/Q_i)\cdot(p_{i+1}/q_{i+1}),$

\item[3)]for all $0 \le j \le i$ there exists a unit $\g_{j,i}\in R_{k_i}$ such
that $T_j = u_i^{Q_i\b_j} \g_{j,i}$.
\end{itemize}
\end{remark}

\begin{remark}\label{admissibleandchunks}
For every $l\geq 0$ we have that $\bar{k}_l=k_{i_l}$. This is
trivial for $l=0$. By induction on $l$, assume that
$\bar{k}_{l-1}=k_{i_{l-1}}$. We have that
$
\e(\bar{p}_l,\bar{q}_l)=\e((p_{i_{l-1}+1}+\dots+p_{i_l-1})q_{i_l}+p_{i_l},q_{i_l})=p_{i_{l-1}+1}+\dots+p_{i_l-1}+
\e(p_{i_l},q_{i_l})=\e(p_{i_{l-1}+1},q_{i_{l-1}+1})+\dots+\e(p_{i_l-1},q_{i_l-1})+
\e(p_{i_l},q_{i_l}),
$
since $q_{i_{l-1}+1}=\dots=q_{i_l-1}=1$. Therefore
$\bar{k}_l=\bar{k}_{l-1}+\e(\bar{p}_l,\bar{q}_l)=k_{i_{l-1}}+\e(p_{i_{l-1}+1},q_{i_{l-1}+1})+\dots
+\e(p_{i_l-1},q_{i_l-1})+ \e(p_{i_l},q_{i_l})=k_{i_l}$.
\end{remark}

\begin{remark}
Notice that $(u_i,v_i)$ of Remark \ref{shortchunks} are not in
general admissible parameters of $R_{k_i}$. If the index $k_i$
corresponds to an admissible choice of parameters then $i+1=i_l$ for
some $l$, that is, $k_i=k_{i_l-1}=\bar{k_l}-\e(p_{i_l}, q_{i_l})$.
\end{remark}


\section{Strong Monomialization}\label{strongmonomialization}

In this section we recall definitions and results from Section 7 of
\cite{C&P} that will be needed in this paper.

Let $\k$ be an algebraically closed field of characteristic $\bp>0$,
and let $K^*/K$ be a finite and separable extension of algebraic
function fields of transcendence degree two over $\k$. Let $\n^*$ be
a $\k$-valuation of $K^*$ with valuation ring $V^*$ and value group
$\G^*$. Let $\n$ be the restriction of $\n^*$ to $K$ with valuation
ring $V$ and value group $\G$. Consider an extension of algebraic
regular local rings $R \subset S$ where $R$ has quotient field $K$,
$S$ has quotient field $K^*$, $R$ is dominated by $S$ and $S$ is
dominated by $V^*$. We assume that $\G^*$ and $\G$ are non-discrete
subgroups of $\mathbb{Q}$. In particular we have that
$\trdeg_{\k}(V^*/{m_{V^*}})=0$, and so $V^*/{m_{V^*}}\simeq\k$,
since $\k$ is algebraically closed. Let $S=S_0\ra S_1\ra
S_2\ra\dots\ra S_s\ra\dots$ be the quadratic sequence along $\n^*$,
and let $R=R_0\ra R_1\ra R_2\ra\dots\ra R_r\ra\dots$ be the
quadratic sequence along $\n$. For $i\geq 1$ we denote the reduced
exceptional locus of $\spec R_i\ra\spec R$ by $E_i$, and the reduced
exceptional locus of $\spec S_i\ra\spec S$ by $F_i$.

\begin{definition} Given a pair $(r,s)$ of positive integers, the
pair $(R_r,S_s)$ is said to be {\it prepared} if the following
properties hold:
\begin{itemize}
\item[i)] $S_s$ dominates $R_r$.
\item[ii)] $R_r$ and $S_s$ are free.
\item[iii)] The critical locus of $\spec S_s\ra\spec R_r$ is contained in
$F_s$.
\item[iv)] We have $u= x^t\d$, where $u$ (resp. $x$) is a regular
parameter of $R_r$ (resp. $S_s$) whose support is $E_r$ (resp.
$F_s$), and $\d$ is a unit in $S_s$.
\end{itemize}
\end{definition}

It is shown in \cite[7.6]{C&P} that given a prepared pair
$(R_r,S_s)$, any pair $(R_{r'},S_{s'})$ with $r'\geq r$, $s'\geq s$,
and such that both $R_{r'}$ and $S_{s'}$ are free and $S_{s'}$
dominates $R_{r'}$, is also prepared.

Let $(R_r,S_s)$ be prepared. Recall that a regular system of
parameters (r.s.p.) $(u,v)$ of $R_r$ is said to be admissible if the
support of $u$ is equal to $E_r$ and if $\n(v)$ is maximal among all
such r.s.p. containing $u$.

Now we briefly recall the algorithm described in Section 7.4 of
\cite{C&P}. We fix a prepared pair $(R,S)=:(R_{r_0},S_{s_0})$ such
that $m_RS$ is not a principal ideal. By induction on $n\geq 0$, we
associate with a given prepared pair $(R_{r_n},S_{s_n})$ such that
$m_{R_{r_n}}S_{s_n}$ is not a principal ideal, a new prepared pair
$(R_{r_{n+1}},S_{s_{n+1}})$, with $r_{n+1}>r_n$, $s_{n+1}>s_n$, and
such that $m_{R_{r_{n+1}}}S_{s_{n+1}}$ is not a principal ideal.

Let $(u_{r_n},v_{r_n})$ be an admissible r.s.p. of $R_{r_n}$. Let
$(x_{s_n},y_{s_n})$ be a r.s.p. of $S_{s_n}$ such that the support
of $x_{s_n}$ is $F_{s_n}$. Write
\begin{equation}
\begin{array}{ll}
u_{r_n} &=  x_{s_n}^{t_n}\d_n  \\
v_{r_n} &=  x_{s_n}^{\beta_n}f_n \\
\end{array}
\end{equation}
where $\d_n$ is a unit in $S_{s_n}$, and $x_{s_n}$ does not divide
$f_n$. Notice that $f_n$ is not a unit, since $m_{R_{r_n}}S_{s_n}$
is not a principal ideal. Among all $s\geq s_n$, there is a least
integer $s_{n+1}$ such that $S_{s_{n+1}}$ is free and the strict
transform of div$(f_n)$ in $S_{s_{n+1}}$ is empty. It follows that
$s_{n+1}>s_n$. By construction, $m_{R_{r_n}}S_{s_{n+1}}$ is a
principal ideal. The nonempty set of integers $r>r_n$ such that
$S_{s_{n+1}}$ dominates $R_r$ has a maximal element denoted by
$r_{n+1}$. This completes the definition of the pair
$(R_{r_{n+1}},S_{s_{n+1}})$. It is shown in \cite[7.18]{C&P} that
the algorithm is well defined, that is, the pair $(r_{n+1},
s_{n+1})$ does not depend on the choice of an admissible r.s.p.
$(u_{r_n},v_{r_n})$ of $R_{r_n}$. Moreover,
$(R_{r_{n+1}},S_{s_{n+1}})$ is prepared.

We are now ready to state Cutkosky and Piltant's Strong
Monomialization theorem for defectless extensions.

\begin{theorem} {\rm (Strong Monomialization
\cite[7.35]{C&P})}\label{strongmonom} In the above set-up and
notations, assume that $V^*/V$ is defectless. The inclusion
$R_{r_n}\subset S_{s_n}$ is given for $n>>0$ by
\begin{equation}
\begin{array}{ll}
u_{r_n} &=   x_{s_n}^t\d_n \\
v_{r_n} &=  y_{s_n}\\
\end{array}
\end{equation}
where $t$ is a positive integer, $\d_n$ is a unit in $S_{s_n}$, and
$(x_{s_n}, y_{s_n})$ is an admissible r.s.p. of $S_{s_n}$.
\end{theorem}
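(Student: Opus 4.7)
The plan is to iterate the algorithm just described and to bound a tuple of numerical invariants attached to each prepared pair $(R_{r_n},S_{s_n})$ so that the decomposition
\[
u_{r_n}=x_{s_n}^{t_n}\d_n,\qquad v_{r_n}=x_{s_n}^{\b_n}f_n
\]
(with $x_{s_n}\nmid f_n$ and $f_n\in m_{S_{s_n}}$) is eventually forced into the shape $\b_n=0$ and $f_n=y_{s_n}$ a regular parameter. The first step is local bookkeeping: using Lemma \ref{transformation} and Remark \ref{shortchunks}, I would compute explicitly how the chain $S_{s_n}\to S_{s_{n+1}}$ (and the corresponding chain on the $R$-side) transforms $x_{s_n}^{\b_n}f_n$ into $x_{s_{n+1}}^{\b_{n+1}}f_{n+1}$, expressing $(t_{n+1},\b_{n+1})$ and the ``complexity'' of $f_{n+1}$ (its multiplicity, and the number of jumping polynomials needed to describe it in an admissible system of parameters of $R_{r_n}$) in terms of the Euclidean-algorithm data of the pair $(\b_n,t_n)$ and of the jumping-polynomial expansion of $f_n$ in the generating sequence of $\n^*$.

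The second step is to bring in the defectless hypothesis. It gives the equality $[K^*{:}K]=e(V^*/V)f(V^*/V)$, and because $\trdeg_\k(V^*/m_{V^*})=0$ forces $f(V^*/V)=1$, the degree $[K^*{:}K]$ equals the ramification index, a finite invariant. I would translate this rigid bound, via the dictionary between generating sequences of $\n$ and $\n^*$ provided by Section \ref{jumpingpoly} and Theorem \ref{monoidalseq-lemma} (pulled up to $R_{r_n}$ via its own generating sequence), into a well-founded descent on the lexicographic pair $(\text{complexity of }f_n,\b_n)$. The bound on ramification rules out the infinite cascading that can occur in the defect example \cite[7.38]{C&P}. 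Once the descent terminates, $\b_n=0$ and $v_{r_n}=f_n$ with $x_{s_n}\nmid f_n$; admissibility of $(u_{r_n},v_{r_n})$ together with the maximality rule defining $r_{n+1}$ then forces $f_n$ to be a regular parameter of $S_{s_n}$ transverse to $x_{s_n}$, giving the desired form, after which $t$ remains constant because any further quadratic transform only modifies the unit $\d_n$.

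The main obstacle will be Step 2. The defectless hypothesis enters at a single sharp point and is indispensable, so it must be used in a very precise way. My expectation is that the argument requires analyzing how the inertia and ramification subgroups of the Galois closure of $K^*/K$ act on the jumping polynomials of $\n^*$ in $S_{s_n}$, and using defectlessness to upgrade Abhyankar's inequality to an equality that limits the possible splitting of jumping polynomials across the extension. This ramification-theoretic core is where positive-characteristic and mixed-ramification phenomena enter nontrivially, and it is the reason why the proof in \cite{C&P} is substantially more delicate than the characteristic zero case treated in \cite{GHK}.
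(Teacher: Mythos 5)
The first thing to point out is that this theorem is not proved in the paper at all. As the attribution ``(Strong Monomialization \cite[7.35]{C&P})'' and the sentence immediately preceding the statement make explicit, Theorem \ref{strongmonom} is quoted from Cutkosky and Piltant, where the full argument occupies a substantial portion of Section~7 of \cite{C&P}. The paper's own contribution is to recall the algorithm and then use the theorem as a black box, so there is no internal proof for your sketch to be compared against.

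As to the sketch itself: you have correctly identified the coarse shape of the argument --- iterate the algorithm on prepared pairs, track numerical invariants, and use the defectless hypothesis to preclude the infinite cascading phenomenon that \cite[7.38]{C&P} shows really does occur when there is a defect. But you have explicitly flagged the crucial step as unworked (``My expectation is that the argument requires analyzing how the inertia and ramification subgroups\dots''), and that is exactly where the entire difficulty of the theorem lives, so what you have is an outline with a hole at its centre rather than a proof. A few concrete issues with what you do commit to. First, in the setting in force here ($\n^*$ of rank~$1$, rational rank~$1$, zero-dimensional) Abhyankar's inequality reads $1\le 2$ and carries no information, so there is no ``upgrading Abhyankar's inequality to an equality'' available; the relevant defectless consequence is the fundamental equality for the extensions of $\n$ to $K^*$ (your formula $[K^*{:}K]=e(V^*/V)f(V^*/V)$ is correct only if $\n^*$ is the unique extension), and turning that global count into control of the local quadratic sequences is precisely the nontrivial content of \cite[7.35]{C&P}. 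Second, your descent invariant ``(complexity of $f_n$, $\b_n$)'' is not pinned down precisely enough for one to even state, let alone verify, that it decreases; in \cite{C&P} the control comes from a direct analysis of strict transforms and of residue extensions along the two quadratic sequences, not from Galois-theoretic data about inertia and ramification subgroups of a Galois closure, which \cite{C&P} does not pass to. Third, your concluding step --- that $\b_n=0$ together with admissibility and the maximality rule for $r_{n+1}$ forces $f_n$ to be a transverse regular parameter, after which ``$t$ remains constant'' --- is one of the delicate verifications, not a formality, and is asserted rather than argued. So there is a genuine gap, concentrated in your Step~2 and in the endgame, and it is the same gap you yourself anticipate.
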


In particular the theorem shows that the equation defining the
inclusion $R_{r_n}\subset S_{s_n}$ gets a stable form. We recall
that Strong Monomialization may not hold if the extension $V^*/V$
has a defect \cite[7.38]{C&P}. It is not known if the weaker form of
the monomialization theorem \cite[4.1]{C&P} holds in this case.

\section{Quadratic transforms}\label{qt}

In this section we discuss several preparatory results that we will
need in the proof of the main theorem.

Throughout this section let $R \subset S$ be an extension of two
dimensional algebraic regular local rings with quotient fields $K$
and $K^*$ respectively, such that $V^*$ dominates $S$ and $S$
dominates $R$. We further assume that $\G^*$ and $\G$ are
non-discrete subgroups of $\mathbb{Q}$. Suppose that $R$ has regular
parameters $(u,v)$ and $S$ has regular parameters $(x,y)$. Let $p$
and $q$ be positive coprime integers such that $\n(v)/\n(u)=p/q$ and
let $k=\e(p,q)$. Let $p'$ and $q'$ be positive coprime integers such
that $\n^*(y)/\n^*(x)=p'/q'$ and let $k'=\e(p',q')$.

\begin{remark}\label{admissibleforRandS}
Suppose that the inclusion $R\subset S$ is given by
\begin{equation}
\begin{array}{ll}
u &=  x^t \d  \\
v &=  y,\\
\end{array}
\end{equation}
where $t$ is a positive integer and $\d$ is a unit in $S$. We have
that $\n(v)/\n(u)=\n^*(y)/t\n^*(x)=p'/tq'$. Suppose that $(x,y)$ are
admissible parameters. Then by Lemma \ref{permissible-admissible}
$(u,v)$ are admissible, since $q'\neq 1$ implies $q\neq 1$.
Viceversa, if $(u,v)$ are admissible and $t$ divides $p'$, then
$(x,y)$ are admissible, since in this case $q'=q$.
\end{remark}

\begin{remark}\label{chunk}
Suppose that the inclusion $R\subset S$ is given by
\begin{equation}
\begin{array}{ll}
u &=  x^t \d  \\
v &=  y,\\
\end{array}
\end{equation}
where $t$ is a positive integer and $\d$ is a unit in $S$. Let $g$
be the greatest common divisor of $t$ and $p'$. Recall that
$\n(v)/\n(u)=\n^*(y)/t\n^*(x)=p'/tq'$. Writing $t=g \tilde t$ and
$p'=g\tilde p$, where $(\tilde t,\tilde p)=1$, gives $p=\tilde p$
and $q= q'\tilde t$. After possibly multiplying $u$ by a constant we
may assume that $\d=1+w$ for some $w\in m_S$. Let $a, b, a', b'$ be
nonnegative integers such that $a\le p$, $a'\le p'$, $b<q$, $b'<q'$
and $aq-bp=1$, $a'q'-b'p'=1$.

By Lemma \ref{transformation} applied to $S$ and $R$ respectively,
we get that $S_{k'}$ has a permissible system of parameters
$(X,Y')=\left(x^{a'}/y^{b'},y^{q'}/x^{p'}-c'\right)$, where
$c'\in\k$ is the residue of $y^{q'}/x^{p'}$, and $R_{k}$ has a
permissible system of parameters
$(U,V)=\left(u^{a}/v^{b},v^{q}/u^{p}- c\right)$, where $c\in\k$ is
the residue of $v^{q}/u^{p}$. Moreover, $x=X^{q'}(Y'+c')^{b'}$,
$y=X^{p'}(Y'+c')^{a'}$ and $u=U^{q}(V+c)^{b}$, $v=U^{p}(V+c)^{a}$.

Now
$$
U=\frac{u^{a}}{v^{b}}=\frac{x^{t a}\d^{a}}{y^{
b}}=\frac{[X^{q'}(Y'+c')^{b'}]^{t a}\d^{a}}{[X^{p'}(Y'+c')^{a'}]^{
b}}=\frac{X^{q't a}}{X^{p' b}}\d^{a}(Y'+c')^{b'ta- a'b}= X^g\D,
$$
where $\D=\d^{a}(Y'+c')^{b'ta- a'b}$ is a unit in $S_{k'}$. Notice
that the last equality holds since $q'ta-p'b=q'g\tilde ta-g\tilde
pb=g(q'\tilde ta-\tilde p b)=g( q a-p b)=g$.

Furthermore notice that $$\dfrac{v^{q}}{u^{p}}=\dfrac{y^{q}}{x^{t
p}}\d^{-p}=\dfrac{y^{q'\tilde t}}{x^{g\tilde tp}}\d^{-p}=\left
(\dfrac{y^{q'}}{x^{p'}}\right)^{\tilde t}\d^{-p}=\left
(\dfrac{y^{q'}}{x^{p'}}\right)^{\tilde t}(1+w)^{-p}.$$ Therefore $
c=(c')^{\tilde t}$, and
$$
V=\frac {v^{q}}{u^{p}}-c=\left (\frac{y^{q'}}{x^{p'}}\right)^{\tilde
t}(1+w)^{-p}-(c')^{\tilde t}=\left
(\frac{y^{q'}}{x^{p'}}\right)^{\tilde t}-(c')^{\tilde t}+W$$

where $W\in wS\subset m_S$. Since $m_S\subset (X)S_{k'}$ we have
that $W=XZ$ for some $Z\in S_{k'}$. Write $\tilde t=\bp^nt'$, where
$n\geq 0$ and $\bp$ does not divide $t'$.

Then $$V=\left[ \left
(\frac{y^{q'}}{x^{p'}}\right)^{t'}-(c')^{t'}\right
]^{\bp^n}+XZ=\left (\frac{y^{q'}}{x^{p'}}-c'\right)^{\bp^n}
\gamma^{\bp^n}+XZ=(Y')^{\bp^n}\gamma_1+XZ,$$

where $$\gamma=\dfrac {\left
(\frac{y^{q'}}{x^{p'}}\right)^{t'}-(c')^{t'}}{\left
(\frac{y^{q'}}{x^{p'}}-c'\right)}$$ and $\gamma_1=\gamma^{\bp^n}$ is
a unit in $S_{k'}$. In particular $S_{k'}$ dominates $R_k$.

\end{remark}

\begin{remark}\label{goodchunk}
In the set-up of Remark \ref{chunk} we will be interested in the
case when $(t,q)=1$, or equivalently $t$ divides $p'$. In this case
$g=t$, $\tilde t=1$, $t'=1$ and $n=0$. In particular we have that
$V=Y'\gamma_1+XZ$. Setting $Y=Y'\gamma_1+XZ$, we have that $(X,Y)$
is a permissible system of parameters of $S_{k'}$, and the inclusion
$R_{k}\subset S_{k'}$ is given by
\begin{equation}
\begin{array}{ll}
U &=   X^t \Delta \\
V &=  Y.\\
\end{array}
\end{equation}
\end{remark}

\begin{remark}\label{preparedpairs}
 Suppose that the inclusion $R=R_{r_0}\subset S=S_{s_0}$ is given by
\begin{equation}
\begin{array}{ll}
u &=   x^t \d \\
v &=  y,\\
\end{array}
\end{equation}
where $t$ is a positive integer and $\d$ is a unit in $S$. Suppose
also that $(u,v)$ are admissible parameters of $R$. We claim that
$(R_{k},S_{k'})=(R_{r_1},S_{s_1})$. By definition (see Section
\ref{strongmonomialization}), $S_{s_1}$ is the first free ring in
the quadratic sequence for $S$ such that the strict transform of $y$
in such ring is empty. The sequence $S=S_0\ra S_1\ra\dots\ra S_{k'}$
has been explicitly described in Section 6 of \cite{GHK} (see also
Lemma \ref{transformation}). It follows from this description that
$S_{s_1}=S_{k'}$. Furthermore, by Remark \ref{chunk} $S_{k'}$
dominates $R_{k}$, and $R_{k}$ is the biggest free ring in the
quadratic sequence for $R$ with this property. So $R_{k}=R_{r_1}$.
\end{remark}


As in Section \ref{jumpingpoly}, for all $i>0$ let $(p_i,q_i)$ be
the pair of coprime integers defined in the construction of jumping
polynomials $\{T_i\}_{i\ge 0}$ in $R$. Let $\{T_{i_l}\}_{l\geq 0}$
be the sequence of independent jumping polynomials in $R$. For $l>0$
let $\bar{q}_l=q_{i_l}$,
$\bar{p}_l=(p_{i_{l-1}+1}+\dots+p_{i_l-1})\bar{q}_l+p_{i_l}$. Let
$\bar{k}_0=0$, $\bar{k}_l=\bar{k}_{l-1}+\e(\bar{p}_l,\bar{q}_l)$ if
$l>0$. Let $k_0=0$ and $k_i=k_{i-1}+\e(p_i,q_i)$ if $i>0$. For all
$i>0$ let $(p'_i,q'_i)$ be the pair of coprime integers defined in
the construction of jumping polynomials $\{T'_i\}_{i\ge 0}$ in $S$.
Let $\{T'_{j_l}\}_{l\geq 0}$ be the sequence of independent jumping
polynomials in $S$. For $l>0$ let $\bar{q}'_l=q'_{j_l}$,
$\bar{p}'_l=(p'_{j_{l-1}+1}+\dots+p'_{j_l-1})\bar{q}'_l+p'_{j_l}$.
Let $\bar{k}'_0=0$,
$\bar{k}'_l=\bar{k}'_{l-1}+\e(\bar{p}'_l,\bar{q}'_l)$ if $l>0$.
Finally, let $k'_0=0$, and $k'_i=k'_{i-1}+\e(p'_i,q'_i)$ if $i>0$.

\begin{lemma}\label{admissibleparameters}
Assume that the equation defining the inclusion $R_{r_l}\subset
S_{s_l}$ for $l\geq 0$ has the stable form of Theorem
\ref{strongmonom}. Then $R_{r_l}=R_{\bar{k}_l}=R_{k_{i_l}}$, and
$S_{s_l}=S_{\bar{k}'_l}=S_{k'_{j_l}}$. In particular, if
$(u_{r_l},v_{r_l})$ is an admissible system of parameters of
$R_{r_l}$ we have that $\n(v_{r_l})/\n(u_{r_l})=\bar {p}_{l+1}/\bar
{q}_{l+1}$. Similarly, if $(x_{s_l},y_{s_l})$ is an admissible
system of parameters of $S_{s_l}$ we have that
$\n^*(y_{s_l})/\n^*(x_{s_l})=\bar {p}'_{l+1}/\bar {q}'_{l+1}.$
\end{lemma}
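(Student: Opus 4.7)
The plan is to proceed by induction on $l\ge 0$. The base case is immediate: $\bar{k}_0=0$ by definition and $i_0=0$ (since $T_0=u$ has $q_0=\infty\neq 1$ and is therefore independent), so $R_{r_0}=R=R_{\bar{k}_0}=R_{k_{i_0}}$, and the analogous statement holds for $S$.

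For the inductive step, suppose $R_{r_l}=R_{\bar{k}_l}=R_{k_{i_l}}$ and $S_{s_l}=S_{\bar{k}'_l}=S_{k'_{j_l}}$. By the stable-form hypothesis and the construction of the Cutkosky--Piltant algorithm preceding Theorem \ref{strongmonom}, both $(u_{r_l},v_{r_l})$ and $(x_{s_l},y_{s_l})$ are admissible systems of parameters. Theorem \ref{monoidalseq-lemma} produces an admissible system $(u_l,v_l)$ of $R_{\bar{k}_l}=R_{r_l}$ with $\n(v_l)/\n(u_l)=\bar{p}_{l+1}/\bar{q}_{l+1}$, and Lemma \ref{samevalue} forces any other admissible system of $R_{r_l}$ to have the same values. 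Hence $\n(v_{r_l})/\n(u_{r_l})=\bar{p}_{l+1}/\bar{q}_{l+1}$; the analogous argument on the $S$-side (invoking the parallel statement of Theorem \ref{monoidalseq-lemma} for $S_{\bar{k}'_l}$) gives $\n^*(y_{s_l})/\n^*(x_{s_l})=\bar{p}'_{l+1}/\bar{q}'_{l+1}$, settling the two value-ratio claims of the lemma.

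Next I would apply Remark \ref{preparedpairs} to the prepared pair $(R_{r_l},S_{s_l})$, whose inclusion is already in stable form with $(u_{r_l},v_{r_l})$ admissible. That remark (combined with Lemma \ref{transformation}) identifies the next pair produced by the algorithm as $(R_{r_l+k},S_{s_l+k'})$, where $k=\e(p,q)$ and $k'=\e(p',q')$ with $p/q=\n(v_{r_l})/\n(u_{r_l})$ and $p'/q'=\n^*(y_{s_l})/\n^*(x_{s_l})$. Substituting the ratios just computed, $k=\e(\bar{p}_{l+1},\bar{q}_{l+1})$ and $k'=\e(\bar{p}'_{l+1},\bar{q}'_{l+1})$, so the recursive definitions $\bar{k}_{l+1}=\bar{k}_l+k$ and $\bar{k}'_{l+1}=\bar{k}'_l+k'$ yield $R_{r_{l+1}}=R_{\bar{k}_{l+1}}$ and $S_{s_{l+1}}=S_{\bar{k}'_{l+1}}$. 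Remark \ref{admissibleandchunks} then gives $R_{\bar{k}_{l+1}}=R_{k_{i_{l+1}}}$ and $S_{\bar{k}'_{l+1}}=S_{k'_{j_{l+1}}}$, closing the induction.

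The main bookkeeping point is verifying that a single step of the algorithm from $(R_{r_l},S_{s_l})$ to $(R_{r_{l+1}},S_{s_{l+1}})$ corresponds to exactly one full block of $\e(\bar{p}_{l+1},\bar{q}_{l+1})$ quadratic transforms of $R$ along $\n$ and $\e(\bar{p}'_{l+1},\bar{q}'_{l+1})$ transforms of $S$ along $\n^*$. This matching is exactly what Remark \ref{preparedpairs} and Lemma \ref{transformation} were designed to encode; once the admissible parameters on both sides have been aligned via Theorem \ref{monoidalseq-lemma} and Lemma \ref{samevalue}, no further geometric input is needed.
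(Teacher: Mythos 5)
Your proof is correct and matches the paper's argument in all essential respects: both prove $R_{r_l}=R_{\bar{k}_l}$ and $S_{s_l}=S_{\bar{k}'_l}$ by induction, using Remark~\ref{preparedpairs} for the inductive step, and derive the value-ratio claims and the identifications $\bar{k}_l=k_{i_l}$, $\bar{k}'_l=k'_{j_l}$ from Theorem~\ref{monoidalseq-lemma}, Lemma~\ref{samevalue}, and Remark~\ref{admissibleandchunks}. The only cosmetic difference is that you run the induction from $l$ to $l+1$ and make the value-ratio computation explicit inside the step, while the paper states it as a consequence of the identification of rings; the content is identical.
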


\begin{proof}
It suffices to show that for every $l\geq 0$ we have that
$R_{r_l}=R_{\bar{k}_l}$ and $S_{s_l}=S_{\bar{k}'_l}$. Then the other
claims follow from Remark \ref{admissibleandchunks}, Theorem
\ref{monoidalseq-lemma} and Lemma \ref{samevalue}.

We apply induction on $l$. The case $l=0$ is trivial since by
definition $R=R_{r_0}=R_{\bar {k}_0}$ and $S=S_{s_0}=S_{\bar
{k}'_0}$. Now assume that $R_{r_{l-1}}=R_{\bar{k}_{l-1}}$ and
$S_{s_{l-1}}=S_{\bar{k}'_{l-1}}$, that is, $r_{l-1}=\bar{k}_{l-1}$
and $s_{l-1}=\bar{k}'_{l-1}$. By Remark \ref{preparedpairs} applied
to the inclusion $R_{r_{l-1}}\subset S_{s_{l-1}}$, we have that
$r_l=r_{l-1}+\e(\bar{p}_l,\bar{q}_l)=\bar{k}_{l-1}+\e(\bar{p}_l,\bar{q}_l)=\bar{k}_l$
and
$s_l=s_{l-1}+\e(\bar{p}'_l,\bar{q}'_l)=\bar{k}'_{l-1}+\e(\bar{p}'_l,\bar{q}'_l)=\bar{k}'_l$.
\end{proof}


\section{Monomialization of generating sequences}

The goal of this section is to prove the following theorem.

\begin{theorem}\label{monomialization}
Let $\k$ be an algebraically closed field of characteristic $\bp>0$,
and let $K^*/K$ be a finite separable extension of algebraic
function fields of transcendence degree $2$ over $\k$. Let $\n^*$ be
a $\k$-valuation of $K^*$, with valuation ring $V^*$ and value group
$\G^*$, and let $\n$ be the restriction of $\n^*$ to $K$, with
valuation ring $V$ and value group $\G$. Assume that $V^*/V$ is
defectless. Suppose that $R \subset S$ is an extension of algebraic
regular local rings with quotient fields $K$ and $K^*$ respectively,
such that $V^*$ dominates $S$ and $S$ dominates $R$. Then there
exist sequences of quadratic transforms $R \to \bar{R}$ and $S \to
\bar{S}$ along $\n^*$ such that $\bar{S}$ dominates $\bar{R}$ and
the map between generating sequences of $\n$ and $\n^*$ in $\bar{R}$
and $\bar{S}$ respectively, has a toroidal structure.
\end{theorem}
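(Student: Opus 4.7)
The plan is to reduce to one essential case using Section \ref{easycases}, then construct the required generating sequences via jumping polynomials and exploit the stability of Strong Monomialization.

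First, I would dispatch the cases of divisorial valuations, rank $2$ valuations, and rational rank $2$ zero-dimensional valuations (cases (1)--(3) of Section \ref{statement}): each of these follows directly from Cutkosky--Piltant's monomialization combined with the elementary computation of generating sequences carried out in Section \ref{easycases}. The discrete rational rank $1$ case (case (5)) can be handled in parallel with the non-discrete case via Theorem \ref{discrete}, once the non-discrete case is established. Thus the essential case is (4): $\G$ and $\G^*$ are non-discrete subgroups of $\QQ$.

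For this case, since $V^*/V$ is defectless I apply Theorem \ref{strongmonom} to replace $R \subset S$ (after sequences of quadratic transforms along $\n^*$) by rings satisfying
\begin{equation*}
u = x^t \d, \qquad v = y,
\end{equation*}
with $(u,v)$ an admissible r.s.p.\ of $R$, $(x,y)$ a r.s.p.\ of $S$, $t$ a positive integer, and $\d$ a unit of $S$ whose residue I may normalize to be $1$. I then form the jumping polynomial sequence $\{T_i\}_{i \ge 0}$ in $R$ corresponding to $(u,v)$ and the trivial unit sequence, and the sequence $\{T'_i\}_{i \ge 0}$ in $S$ corresponding to $(x,y)$ and the units $\{\d^{n_{i,0}}\}_{i>0}$, where $n_{i,0}$ are the exponents from the construction of $\{T_i\}$. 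By Theorems \ref{nondiscretesequence} and \ref{nondiscreteminsequence}, the corresponding subsequences $\{H_l\}$ and $\{H'_l\}$ of independent jumping polynomials are minimal generating sequences of $\n$ and $\n^*$ respectively.

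The argument now splits into two subcases. If $\gcd(t, Q_k) = 1$ for every $k > 0$, Theorem \ref{relationship} yields $T'_i = T_i$ for all $i > 0$. Together with the relations $u = x^t \d$ and $v = y$, and with $H_0 = u$, $H_1 = v$, this directly exhibits the toroidal structure demanded by item (4) of Section \ref{statement}, completing the theorem in this subcase.

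The main obstacle is the remaining subcase, where $\gcd(t, Q_k) > 1$ for some $k$. My plan is to derive a contradiction with the stable form of Strong Monomialization. I would iterate the algorithm of Section \ref{strongmonomialization}: by Lemma \ref{admissibleparameters}, the prepared pairs coincide with $(R_{\bar k_n}, S_{\bar k'_n})$, and at each step the inclusion is captured by Remark \ref{chunk}. When the current exponent $t_n$ is coprime to $\bar p'_{n+1}$ we stay in the clean form of Remark \ref{goodchunk}, and the new exponent is again $t_n$; but when $g := \gcd(t_n, \bar p'_{n+1}) > 1$, the computation of Remark \ref{chunk} produces $U = X^g \Delta$ with $g < t_n$, and the second transformed generator $V$ acquires a $\bp^n$-th power factor that breaks the stable form required by Theorem \ref{strongmonom}. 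Tracking this carefully, the failure of coprimality eventually contradicts the stability assertion of Theorem \ref{strongmonom}, which forces $\gcd(t, Q_k) = 1$ for all $k$, reducing to the previous subcase. The hardest step is the precise bookkeeping of Remark \ref{chunk} across the iterations of the algorithm, in particular controlling the units and the $\bp^n$-th power factors that arise in positive characteristic and are the true obstruction absent in the characteristic zero argument of \cite{GHK}.
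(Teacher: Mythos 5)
Your proposal follows the paper's proof essentially step for step: reduce to case (4) via Section \ref{easycases}, invoke Theorem \ref{strongmonom} to obtain the stable monomial form, build the two jumping-polynomial sequences with the specified unit choices, settle the coprime subcase by Theorem \ref{relationship} together with Theorem \ref{nondiscreteminsequence}, and otherwise use Lemma \ref{admissibleparameters} and Remarks \ref{chunk}, \ref{preparedpairs} to contradict stability. One small imprecision worth flagging: in the contradiction subcase, the operative contradiction is solely the drop in the exceptional exponent, $U = X^g\Delta$ with $g = (\bar p'_l, t) < t$; the $\bp^n$-th power that shows up in $V$ in Remark \ref{chunk} is a computational feature of positive characteristic (and does not occur at all when $\bp \nmid \tilde t$), so it is not ``the true obstruction'' that breaks the stable form, nor is any iterative tracking needed beyond the first index $l$ where coprimality fails.
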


\begin{proof}

By the discussion of Section \ref{easycases} we only need to
consider the case when $\G^*$ is a subgroup of $\mathbb{Q}$ and
$\trdeg_{\k}(V^*/{m_{V^*}})=0$.

 By
\cite[7.3]{C&P} if $\G^*$ and $\G$ are discrete, and by Theorem
\ref{strongmonom} if $\G^*$ and $\G$ are non-discrete, we may assume
that $R$ has a regular system of parameters $(u,v)$, $S$ has a
regular system of parameters $(x,y)$ such the inclusion $R\subset S$
is given by
\begin{equation}\label{monomialequation}
\begin{array}{ll}
u &=   x^t \d \\
v &=  y,\\
\end{array}
\end{equation}
where $t$ is a positive integer and $\d$ is a unit in $S$. After
possibly multiplying $u$ by a constant we may assume that $\d=1+w$
for some $w\in m_S$. If $t=1$ then the conclusion of the theorem is
trivial, so assume that $t>1$. We may also assume that $\G$ is
normalized so that $\n(u)=1$.

As in Section \ref{jumpingpoly}, let $\{T_i\}_{i\ge 0}$ be the
sequence of jumping polynomials in $R$ corresponding to $(u,v)$ and
the trivial sequence of units $\{1\}_{i>0}$. For all $i>0$ let the
coprime integers $p_i$ and $q_i$ be defined as in the construction
of $\{T_i\}_{i\ge 0}$. Let $\{n_{i,0}\}_{i>0}$ be the powers defined
in the construction of $\{T_i\}_{i\ge 0}$. For $l>0$ let the coprime
integers $\bar{p}_l$ and $\bar{q}_l$ be defined as in the
construction of the subsequence $\{T_{i_l}\}_{l\geq 0}=\{H_l\}_{l\ge
0}$ of independent jumping polynomials.

Similarly, let $\{T'_i\}_{i\ge 0}$ be the sequence of jumping
polynomials in $S$ corresponding to $(x,y)$ and the sequence of
units $\{\d^{n_{i,0}}\}_{i>0}$. For all $i>0$ let the coprime
integers $p'_i$ and $q'_i$ be defined as in the construction of
$\{T'_i\}_{i\ge 0}$. For $l>0$ let the coprime integers $\bar{p}'_l$
and $\bar{q}'_l$ be defined as in the construction of the
subsequence $\{H'_l\}_{l\ge 0}$ of independent jumping polynomials.

First, let us assume that $\G^*$ and $\G$ are discrete subgroups of
$\mathbb{Q}$. After performing a sequence of quadratic transforms
along $\n$ and normalizing $\G$ we may further assume that $\n(u)=1$
generates $\G$. Since $q_i=1$ for all $i>0$, by Theorem
\ref{relationship} we have that $T'_i=T_i$ for all $i>0$. Then by
Theorem \ref{nondiscretesequence} $\{T_i\}_{i\geq 0}$ is a
generating sequence in $R$ and $\{T'_i\}_{i\geq
0}=\{x,\{T_i\}_{i>0}\}$ is a generating sequence in $S$. Notice also
that $\G^*$ is generated by the set of values $\{\n^*(T'_i)\}_{i\geq
0}$, and that for any $i>0$ the value $\n^*(T'_i)=\n(T_i)$ is an
integer. Thus $\n^*(x)=\n(u)/t=1/t$ generates $\G^*$. This concludes
the proof of the theorem in the discrete case.

Now let us assume that $\G^*$ and $\G$ are non-discrete subgroups of
$\mathbb{Q}$. By Theorem \ref{strongmonom} we may assume that
equation (\ref{monomialequation}) is stable and that
$(u,v)=(u_{r_0},v_{r_0})$, $(x,y)=(x_{s_0},y_{s_0})$ are admissible
parameters of $R=R_{r_0}$, $S=S_{s_0}$ respectively.

First, let us assume that $(t,Q_k)=1$ for all $k>0$. By Theorem
\ref{relationship} we have that $T'_i=T_i$ for all $i>0$. Since
$q'_i=q_i$ for all $i>0$, we have that $T'_i$ is an independent
jumping polynomial in $S$ if and only if $T_i$ is an independent
jumping polynomial in $R$. If follows that $\{H'_l\}_{l\ge 0}=
\{x,\{H_l\}_{l>0}\}$.

By Theorem \ref{nondiscreteminsequence} $\{H_l\}_{l\ge 0}$ is a
generating sequence in $R$ and $\{x,\{H_l\}_{l> 0}\}$ is a
generating sequence in $S$. Moreover, $\{x,\{H_l\}_{l> 0}\}$ is a
minimal generating sequence of $\n^*$ since $\bar{p}_1'\geq
p'_1=tp_1>1$, since $t>1$. The conclusion of the theorem follows.

Otherwise let $M$ be the integer such that $(t,Q_k)=1$ for all
$0\leq k<M$ and $(t, q_M)\neq 1$.
Since $(t,q_{M})\neq 1$ it follows that $q_{M}\neq 1$, and so
$M=i_l$ for some $l$. The inclusion $R_{r_{l-1}}\subset S_{s_{l-1}}$
is given by the stable monomial form
\begin{equation}
\begin{array}{ll}
u_{r_{l-1}} &=   x_{s_{l-1}}^{t} \d_{l-1}\\
v_{r_{l-1}} &=  y_{s_{l-1}}\\
\end{array}
\end{equation}
where $\d_{l-1}$ is a unit in $S_{s_{l-1}}$,
$(x_{s_{l-1}},y_{s_{l-1}})$ are admissible parameters of
$S_{s_{l-1}}$, and $(u_{r_{l-1}},v_{r_{l-1}})$ are admissible
parameters of $R_{r_{l-1}}$.

By Lemma \ref{admissibleparameters} we have that
$\n(v_{r_{l-1}})/\n(u_{r_{l-1}})=\bar p_l/\bar q_l$, and
$\n^*(y_{s_{l-1}})/\n^*(x_{s_{l-1}})=\bar p'_l/\bar q'_l$. Recall
that $\bar q_l=q_{i_l}=q_M$. We have that $(t, \bar q_l)\neq 1$, or
equivalently $t$ does not divide $\bar p'_l$.
Now we apply Remark \ref{chunk} and Remark \ref{preparedpairs} with
$R$ and $S$ replaced by $R_{r_{l-1}}$ and $S_{s_{l-1}}$
respectively.

It follows that $R_{r_{l}}$ has permissible regular parameters
$(u_{s_{l}},v_{s_{l}})$ and $S_{s_{l}}$ has permissible regular
parameters $(x_{s_{l}},y_{s_{l}})$ such that $u_{s_{l}}
=x_{s_{l}}^g\D$, where $\D$ is a unit in $S_{s_{l}}$ and $g=(\bar
p'_l ,t)<t$. Hence we obtain a contradiction to the stable form of
equation (\ref{monomialequation}).

We then conclude that $(t,Q_k)=1$ for all $k>0$.
\end{proof}

\begin{remark}
In the proof of Theorem \ref{monomialization} we have
$\{H_l\}_{l\geq 0}$ is a minimal generating sequence of $\n$ in $R$
if $\bar p_1\neq 1$, or equivalently, ${\bar p}'_1\neq t$. Otherwise
$\{H_l\}_{l>0}$ is a minimal generating sequence of $\n$ in $R$.
\end{remark}

Recall that the integers $k_i$ are defined as $k_0=0$ and
$k_{i}=k_{i-1}+\e(p_i,q_i)$ if $i>0$. Similarly $k'_0=0$ and
$k'_i=k'_{i-1}+\e(p'_i,q'_i)$ for all $i>0$.

\begin{corollary}\label{claim} In the set-up of Theorem \ref{monomialization}, assume that the inclusion $R\subset S$ is given
by the stable form (\ref{monomialequation}). The sequences of
quadratic transforms
$$
R=R_0\ra R_{k_1}\ra\dots\ra R_{k_{i-1}}\ra R_{k_i}\ra\dots$$

and $$S=S_0\ra S_{k'_1}\ra\dots\ra S_{k'_{i-1}}\ra S_{k'_i}\ra\dots
$$
have the following properties: for all $i\geq 0$ the rings $R_{k_i}$
and $S_{k'_i}$ are free, there exist permissible systems of
parameters $(u_i,v_i)$ in $R_{k_i}$ and $(x_i,y_i)$ in $S_{k'_i}$
and a unit $\d_i\in S_{k'_i}$ such that
\begin{equation}
\begin{array}{ll}
u_i &=   x_i^t \d_i \\
v_i &=  y_i,\\
\end{array}
\end{equation}
and $\n^*(y_i)/\n^*(x_i)=p'_{i+1}/q'_{i+1}$.
\end{corollary}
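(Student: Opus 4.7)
The plan is induction on $i\geq 0$. The base case $i=0$ will be the hypothesis itself: take $(u_0,v_0)=(u,v)$, $(x_0,y_0)=(x,y)$, $\delta_0=\delta$, and note that $\nu^*(y_0)/\nu^*(x_0)=p'_1/q'_1$ by the very definition of the coprime pair $(p'_1,q'_1)$. For the inductive step, I will first invoke the conclusion of the proof of Theorem \ref{monomialization}: the stable form of $R\subset S$ forces $(t,Q_k)=1$ for every $k>0$, so Theorem \ref{relationship} applies and gives $p'_i=tp_i$ and $q'_i=q_i$ for all $i>0$. Combining the inductive value ratio with the inductive monomial relations yields $\nu(v_{i-1})/\nu(u_{i-1})=p_i/q_i$, and since $(t,q_i)=1$, Remark \ref{goodchunk} will apply to $R_{k_{i-1}}\subset S_{k'_{i-1}}$, so the chunks of $\epsilon(p_i,q_i)$ and $\epsilon(p'_i,q'_i)$ quadratic transforms on the two sides reach the free rings $R_{k_i}$ and $S_{k'_i}$ (Lemma \ref{transformation}) and produce permissible parameters $(U,V)$ in $R_{k_i}$ and $(X,Y)$ in $S_{k'_i}$ with $U=X^t\Delta$ and $V=Y$ for some unit $\Delta\in S_{k'_i}$.

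To obtain the value identity $\nu^*(y_i)/\nu^*(x_i)=p'_{i+1}/q'_{i+1}$, I will substitute for $V$ the canonical strict transform $v_i^\ast=T_{i+1}/\prod_{j=0}^{i-1}T_j^{n_{i,j}}$ supplied by Remark \ref{shortchunks}(2), whose value is $(1/Q_i)(p_{i+1}/q_{i+1})$. Since both $V$ and $v_i^\ast$ are permissible second parameters paired with the same exceptional $U$, they are related by an identity $V=\alpha v_i^\ast+Uh$ for some unit $\alpha\in R_{k_i}$ and some $h\in R_{k_i}$. I will then set $u_i=U$, $v_i=v_i^\ast$, $x_i=X$, $y_i=v_i^\ast$, and $\delta_i=\Delta$; the monomial equalities $u_i=x_i^t\delta_i$ and $v_i=y_i$ persist by design, and $(x_i,y_i)$ remains a system of parameters in $S_{k'_i}$ because $y_i=(Y-X^t\Delta h)/\alpha\equiv Y/\alpha\pmod{x_i}$ is a uniformizer modulo $x_i$. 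The desired value ratio then reads $\nu^*(y_i)/\nu^*(x_i)=t\,\nu(v_i^\ast)/\nu(u_i)=tp_{i+1}/q_{i+1}=p'_{i+1}/q'_{i+1}$ by Theorem \ref{relationship}, closing the induction.

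The main technical point, and the only step requiring real care, is arranging the value ratio while keeping the monomial form intact. Remark \ref{goodchunk} furnishes the monomial form but not directly the desired value for its second parameter, whereas Remark \ref{shortchunks} furnishes the correct value but via a different permissible parameter. The linear adjustment $V=\alpha v_i^\ast+Uh$, together with the observation that swapping $V$ for $v_i^\ast$ still yields a system of parameters in $S_{k'_i}$, is what reconciles the two sources and drives the induction through.
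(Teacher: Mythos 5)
Your proposal is correct and follows essentially the same route as the paper's own proof: use the conclusion of the proof of Theorem~\ref{monomialization} (that $(t,Q_k)=1$ for all $k>0$) together with Theorem~\ref{relationship} to control $p_i', q_i'$, apply Remark~\ref{goodchunk} for the monomial form, pull the correctly-valued second parameter from Remark~\ref{shortchunks}, and reconcile the two via a unit-times-parameter-plus-exceptional relation to show the result is still a system of parameters in $S_{k_i'}$. The only cosmetic difference is that the paper writes the relation as $v_i=\alpha u_i+\beta w_i$ and argues $\beta$ is a unit, whereas you write the equivalent inverted form $V=\alpha v_i^*+Uh$; both arguments are the same.
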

\begin{proof} The proof of Theorem \ref{monomialization}
shows that for all $i\geq 1$ we have $(t,q_i)=1$, $q'_i=q_i$, and
$p'_i=tp_i$. The conclusion is trivial for $i=0$. Assume that $i>0$
and that the statement holds for $i-1$. We apply Remark
\ref{goodchunk} to $R_{k_{i-1}}\subset S_{k'_{i-1}}$. Notice that
$\n^*(y_{i-1})/\n^*(x_{i-1})=p'_i/q'_i$ and
$\n(v_{i-1})/\n(u_{i-1})=p'_i/tq'_i=p_i/q_i$, and therefore
$k=\e(p_i,q_i)$ and $k'=\e(p'_i,q'_i)$. Thus $R_{k_i}$ and
$S_{k'_i}$ are free rings and there exist permissible systems of
regular parameters $(u_i,w_i)$ in $R_{k_i}$ and $(x_i,z_i)$ in
$S_{k'_i}$ such that $u_i=x_i^t\d_i$ and $w_i=z_i$ for some unit
$\d_i\in S_{k'_i}$.

Now by Remark \ref{shortchunks} we get that $R_{k_i}$ has a system
of regular parameters $(h_i,v_i)$ such that $h_i$ is an exceptional
parameter, $\n(h_i)=1/Q_i$ and $ \n(v_i)=
(1/Q_i)\cdot(p_{i+1}/q_{i+1})$. Since $u_i$ is also an exceptional
parameter in $R_{k_i}$ we have $u_i=h_i\g$ for some unit $\g\in
R_{k_i}$. Therefore $(u_i,v_i)$ form a permissible system of
parameters in $R_{k_i}$ and $\n(u_i)=1/Q_i$. Notice also that
$v_i=\a u_i+\b w_i$, where $\a,\b\in R_{k_i}$. Moreover, $\b$ is a
unit in $R_{k_i}$, since the image of $v_i$ is a regular parameter
in $R_{k_i}/(u_i)R_{k_i}$. This implies that $v_i=\a x_i^t\d_i+\b
z_i$ is also a regular parameter in $S_{k'_i}$ and $(x_i,v_i)$ form
a permissible system of parameters in $S_{k'_i}$. We set $y_i=v_i$
and observe that
$$\n^*(x_i)=\dfrac{1}{t}\n(u_i)=\dfrac{1}{tQ_i}\ \   {\rm and}\ \
\n^*(y_i)=\n(v_i)=\dfrac{1}{Q_i}\cdot\dfrac{p_{i+1}}{q_{i+1}}=\dfrac{1}{Q_i}\cdot\dfrac{p'_{i+1}}{tq'_{i+1}}.$$
Therefore $\n^*(y_i)/\n^*(x_i)=p'_{i+1}/q'_{i+1}$.
\end{proof}

\begin{remark} Corollary \ref{claim} shows that for every $i\geq 0$ the
inclusion $R_{k_i} \subset S_{k'_i}$ has the stable form
(\ref{monomialequation}). However, the permissible parameters
 $(x_i,y_i)$ of $S_{k'_i}$ are not admissible if $q'_{i+1}=1$.

Notice also that by Lemma \ref{admissibleparameters} and Corollary
\ref{claim} for all $l\geq 1$ we have a commutative diagram
\begin{equation}
\begin{array}{lllllllllllll}
S_{s_{l-1}}&=&S_{k'_{i_{l-1}}}&\ra&S_{k'_{i_{l-1}+1}}&\ra&\dots&\ra&S_{k'_{i_l-1}}&\ra&S_{k'_{i_l}}&=&S_{s_{l}}\\
&&\uparrow&&\uparrow&&&&\uparrow&&\uparrow\\
R_{r_{l-1}}&=&R_{k_{i_{l-1}}}&\ra&R_{k_{i_{l-1}+1}}&\ra&\dots&\ra&R_{k_{i_l-1}}&\ra&R_{k_{i_l}}&=&R_{r_{l}}\\
\end{array}
\end{equation}
where the sequences satisfy the conclusions of Corollary
\ref{claim}.
\end{remark}


\end{document}